\definecolor{citation}{rgb}{0.2,0.58,0.2} 
\definecolor{formula}{rgb}{0.1,0.2,0.6}
\definecolor{brown}{rgb}{0.4,0.1,0.1}
\definecolor{url}{rgb}{0.3,0,0.5} 
\newtheorem{thm}{Theorem}[section]
\newtheorem{lemma}[thm]{Lemma}
\theoremstyle{definition}
\newtheorem{defn}[thm]{Definition}
\theoremstyle{remark}
\newtheorem{rem}[thm]{Remark}
\numberwithin{equation}{section}
\newcommand{\R}{{\mathds R}}
\newcommand{\Om}{{\Omega}}
\newcommand{\ff}{{\mathcal F}}
\newcommand{\ta}{{\text {\rm Tail}}}
\def\Xint#1{\mathchoice
      {\XXint\displaystyle\textstyle{#1}}%
      {\XXint\textstyle\scriptstyle{#1}}%
      {\XXint\scriptstyle\scriptscriptstyle{#1}}%
      {\XXint\scriptscriptstyle\scriptscriptstyle{#1}} %
\!\int}
   \def\XXint#1#2#3{{\setbox0=\hbox{$#1{#2#3}{\int}$}
        \vcenter{\hbox{$#2#3$}}\kern-.5\wd0}}
   \def\dashint{\Xint-}
\def\dys{\displaystyle}
\def\eps{\varepsilon}
\def\dxy{\,{\rm d}x{\rm d}y}
\def\dy{\,{\rm d}y}
\def\dx{\,{\rm d}x}
\def\dn{\,{\rm d}\nu}
\def\dnb{\,{\rm d}{\bar\nu}}
\def\vs{\vspace{1mm}}
\DeclareRobustCommand*{\bfseries}{%
  \not@math@alphabet\bfseries\mathbf
  \fontseries\bfdefault\selectfont
  \boldmath
}
\newlength{\defbaselineskip}
\newcommand{\setlinespacing}[1]
           {\setlength{\baselineskip}{#1 \defbaselineskip}}
\title
[Nonlocal Harnack inequalities]
{Nonlocal Harnack inequalities}
\author[A. Di Castro]{Agnese Di Castro}
\email[Agnese Di Castro]{\href{mailto:dicastro@mail.dm.unipi.it}{dicastro@mail.dm.unipi.it}}
\author[T. Kuusi]{Tuomo Kuusi}
\email[Tuomo Kuusi]{\href{mailto:tuomo.kuusi@aalto.fi}{tuomo.kuusi@aalto.fi}}
\author[G. Palatucci]{Giampiero Palatucci}
\email[Giampiero Palatucci]{\href{mailto:giampiero.palatucci@unipr.it}{giampiero.palatucci@unipr.it}}
\address[A. Di Castro]
{Dipartimento di Matematica, Universit\`a di Pisa
\\ Largo Bruno Pontecorvo,~5
\\ 56127 Pisa, Italy}
\address[T. Kuusi]
{Department of Mathematics and Systems Analysis, Aalto University
\\ P.O. Box 1100
\\ 00076 Aalto, Finland
}
\address[G. Palatucci]
{Dipartimento di Matematica e Informatica, Universit\`a degli Studi di Parma
\\ Campus - Parco Area delle Scienze~53/A
\\ 43124 Parma, Italy; \newline
\and \newline SISSA 
 \\ Via Bonomea 256 \\ 34136 Trieste, Italy
}
\begin{document}

\subjclass[2010]{Primary 35D10, 35B45;
Secondary 35B05, 35R05, 47G20, 60J75} 

\keywords{Quasilinear nonlocal operators, fractional Sobolev spaces, H\"older regularity, Caccioppoli estimates, singular perturbations, Harnack inequality\vspace{0.5mm}}

\begin{abstract}
\small We state and prove a general Harnack inequality for minimizers of nonlocal, possibly degenerate, integro-differential operators, whose model is the fractional $p$-Laplacian.
\end{abstract}

\maketitle

\setcounter{tocdepth}{2}

\begin{center}
 \rule{11.8cm}{0.5pt}\\[-0.1cm] 
{\sc {\small To appear in}\, {\it J. Funct. Anal.}}
\\[-0.25cm] \rule{11.8cm}{0.5pt}
\end{center}
\vspace{4mm}

 \setlinespacing{1.1}

\section{Introduction}

In the present paper we %will
 deal with an extended class of operators, which include, as a particular case, some fractional powers of the Laplacian. Precisely, let $\Om$ be a bounded domain and take $g$ in the fractional Sobolev spaces $W^{s,p}(\R^n)$, for any $s\in (0,1)$ and any $p>1$.
We will  
prove general Harnack inequalities  
for the weak solutions $u$ to the following class of integro-differential problems
\begin{equation}\label{problema}
\begin{cases}
\mathcal{L} u=0 & \text{in }\Omega,\\
u=g & \text{in }\mathds{R}^n\setminus\Omega,
\end{cases}
\end{equation}
where the operator $\mathcal{L}$ is defined by
\begin{equation}\label{lisa}
\mathcal{L}u(x)=P.~\!V.\!\int_{\mathds{R}^n} K_{\textrm{\tiny sym}}(x,y)|u(x)-u(y)|^{p-2}(u(x)-u(y))\,{\rm d}y, \quad x\in \mathds{R}^n;
\end{equation}
the symbol $P.~\!V.$ means ``in the principal value sense''; and 
$K$ is a suitable kernel of order $(s,p)$ with merely measurable coefficients. Above~$K_{\textrm{\tiny sym}}$ is the symmetric part of~$K$ defined as $K_{\textrm{\tiny sym}}(x,y) = (K(x,y)+K(y,x))/2$. 
Equivalently,  we will consider  the minimizers of the following class of nonlocal functionals 
\begin{equation}\label{funzionale}
\dys
\ff (v):= \int_{\R^n} \int_{\R^n} K(x,y)|v(x)-v(y)|^p \dxy,
\end{equation}
whose domain of definition is $v\in W^{s,p}(\R^n)$. Specifically, we shall consider minimization problems with prescribed boundary values, i.~\!e., $v=g$ on  $\R^n \setminus \Om$. These minimizers indeed coincide  with the solutions to~\eqref{problema}, as seen, e.~\!g., in Theorem 2.3 in \cite{DKP13}.
We refer  to Section~\ref{sec_preliminaries} for the precise assumptions on the involved quantities. However, in order to simplify, one can just keep in mind the model case when the kernel $K(x,y)$  coincides with $|x-y|^{-n-sp}$; that is, the function $u$ is the  solution to the following problem
\begin{equation*}\label{def_p1}
\begin{cases}
\dys
(-\Delta)^{s}_p \,u = 0 & \text{in} \ \Omega, \\[1ex]
u = g & \text{in} \ \R^n\setminus \Omega,
\end{cases}
\end{equation*}
where the symbol~$\dys (-\Delta)^{s}_p$ denotes the usual {\it fractional $p$-Laplacian} operator, though in such a case the difficulties arising from having merely measurable coefficients disappear.

To formulate our main results, there is a special quantity appearing in estimates and being fundamental when we deal with nonlocal operators. Namely, we define the \textit{nonlocal tail} of a function $v\in W^{s,p}(\mathds{R}^n)$ as
\begin{equation}\label{tail_int}
{\rm Tail }(v;{x_0},R):=\left[R^{sp}\int_{\mathds{R}^n \setminus B_R({x_0})}|v(x)|^{p-1}|x-{x_0}|^{-(n+sp)}\,{\rm d}x\right]^{\frac{1}{p-1}}.
\end{equation}
Note that the quantity above is finite whenever $v\in L^q(\R^n)$, $q\geq p-1$ and $R>0$. The definition already appears in~\cite{DKP13}. 
The way how the nonlocal tail will be handled is one of the key-points in the proof of the main result of our paper, which reads as follows
\begin{thm}[\bf Nonlocal Harnack inequality]\label{harnack}
For any $s\in (0,1)$ and any $p \in (1,\infty)$, let $u\in W^{s,p}(\mathds{R}^n)$ be a weak solution to~\eqref{problema} such that $u\geq 0$ in $B_R\equiv B_R({x_0})\subset\Omega$. Then the following estimate holds for any $B_r\equiv B_r({x_0})\subset B_{R/2}(x_0)$,
\begin{equation}\label{cloe}
\sup_{B_r} u \, \leq \, c \inf_{B_r} u + c \left(\frac{r}{R}\right)^{\frac{sp}{p-1}}\text{\rm Tail}(u_{-}; {x_0},R),
\end{equation}
where ${\rm Tail}(\cdot)$ is defined in \eqref{tail_int}, $u_-=\max\{-u, 0\}$ is the negative part of the function~$u$, and the constants~$c$ depend only on $n$, $p$, $s$ and on the structural constants~$\lambda$ and $\Lambda$ defined in~\eqref{hp_k}.
\end{thm}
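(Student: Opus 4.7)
The plan is to follow a De Giorgi-type scheme rather than a pure Moser iteration, since the nonlinear exponent $p\neq 2$ obstructs the multiplicative arguments that work in the linear/local case. I would decompose the proof into three ingredients and a combining step: (a) a local sup estimate for nonnegative subsolutions; (b) a logarithmic Caccioppoli estimate for positive supersolutions, yielding BMO-type control; (c) a measure-to-pointwise ``expansion of positivity'' lemma. Throughout I would work with the shifted function $w := u + d$, where $d := (r/R)^{sp/(p-1)}\,\ta(u_-; x_0, R)$. Since $\mathcal{L}$ depends only on differences $u(x)-u(y)$, the constant shift leaves $w$ a solution of the same equation, makes it strictly positive on $B_R$, and reabsorbs the tail of $u_-$ coming from $\R^n\setminus B_R$ into clean factors of $d^{p-1}r^{-sp}$.

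For (a) I would plug truncated test functions $(u-k)_+\eta^p$ into the weak formulation and iterate the resulting Caccioppoli and fractional Sobolev estimates over an increasing sequence of levels $k_j$ and shrinking radii $r_j$ in De Giorgi fashion, arriving at
\begin{equation*}
\sup_{B_{r/2}} u_+ \,\leq\, \delta\, \ta(u_+; x_0, r/2) \,+\, c\,\delta^{-\gamma} \left(\dashint_{B_r} u_+^p\,\dx\right)^{1/p},
\end{equation*}
with $\delta>0$ free. For (b), testing against $\eta^p w^{1-p}$ and exploiting the algebraic inequality that bounds $|u(x)-u(y)|^{p-2}(u(x)-u(y))\bigl(w(x)^{1-p}-w(y)^{1-p}\bigr)$ from below in terms of $\bigl|\log(w(x)/w(y))\bigr|^p$ produces
\begin{equation*}
\int_{B_{3r/2}} \!\int_{B_{3r/2}} K(x,y)\left| \log \frac{w(x)}{w(y)} \right|^p\dxy \,\leq\, c\, r^{n-sp},
\end{equation*}
in which the definition of $d$ is precisely what makes the far-field error term scale-invariantly bounded. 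A fractional Poincar\'e inequality then places $\log w$ in a BMO-type class on $B_r$ with a universal norm, so that Chebyshev yields a level $k_0>0$, depending only on the structural data, together with a subset $E\subset B_r$ of measure $|E|\geq \frac{1}{2}|B_r|$ on which $e^{-k_0}\bar w \leq w \leq e^{k_0}\bar w$ for a common value $\bar w$ (one may take $\bar w := e^{(\log w)_{B_r}}$).

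Ingredient (c) is itself a De Giorgi lemma for $w$ viewed as a supersolution: if $|\{w\geq h\}\cap B_r|\geq \frac{1}{2}|B_r|$, then $w\geq c\,h$ on $B_{r/2}$ up to an additive tail correction, proved by iterating Caccioppoli on the decreasing levels $h,h/2,h/4,\ldots$ via Sobolev embedding. Applied with $h= e^{-k_0}\bar w$ this yields $\inf_{B_{r/2}} w \geq c\,\bar w$, while (a) applied to $w$ in conjunction with the upper density on $E$ and an $L^p$-to-$L^\infty$ upgrade gives $\sup_{B_{r/2}} w \leq c\,\bar w$ modulo a tail; chaining these eliminates $\bar w$, and undoing the shift $u = w-d$ produces \eqref{cloe}, after which a standard covering lifts the estimate from the ``safe'' scale $R/2$ down to arbitrary $B_r\subset B_{R/2}$. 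The main obstacle I anticipate is the nonlocal bookkeeping: each localization by a cutoff $\eta$ generates far-field residues carrying tails of $u$ from $\R^n\setminus B_R$, and one must calibrate $d$, the cutoff radii, and the absorption constants so that these residues never swallow the main terms. This calibration is precisely what forces the sharp exponent $sp/(p-1)$ in front of $\ta(u_-;x_0,R)$ in \eqref{cloe}.
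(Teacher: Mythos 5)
Your plan is recognizably in the DiBenedetto--Trudinger spirit that the paper also follows, and ingredients (b) and (c) correspond closely to the paper's Lemmas~\ref{thm_exp} and~\ref{thm_exp2}: the logarithmic test function $\tilde u^{1-p}\varphi^p$ with $\tilde u = u+d$, fractional Poincar\'e, and a De Giorgi iteration yielding $\inf_{B_{4r}}\tilde u \geq \delta k$ from a density hypothesis. The paper, however, converts the density information to an $L^\varepsilon$--bound via the Krylov--Safonov covering lemma (Lemma~\ref{lem_KS}) rather than a John--Nirenberg crossover from BMO of $\log\tilde u$; that is a genuine, if largely cosmetic, methodological difference, and your route would plausibly also work for the $\inf$ side.

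The real gap is on the $\sup$ side. Your ingredient (a) produces
\[
\sup_{B_{\rho/2}} u \;\leq\; c\,\delta\,\ta(u_+;x_0,\rho/2) \;+\; c\,\delta^{-\gamma}\Big(\dashint_{B_\rho} u_+^p\,\dx\Big)^{1/p},
\]
and the term $\ta(u_+;x_0,\rho/2)$ is \emph{not} controllable by the shift $d$ or by the density on your set $E$: it collects contributions of $u_+$ from $B_R\setminus B_{\rho/2}$ (where $u$ is nonnegative but not a priori bounded) and from $\R^n\setminus B_R$, and neither is dominated by $\inf u$ plus $\ta(u_-;x_0,R)$ without an additional argument. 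The phrase ``upper density on $E$ and an $L^p$-to-$L^\infty$ upgrade'' does not close this: knowing $w\leq e^{k_0}\bar w$ on half of $B_r$ gives no bound on $\big(\dashint_{B_r} w^p\big)^{1/p}$, and even a full $L^\varepsilon$ bound leaves $\ta(u_+)$ dangling. The paper's missing ingredient, which you would need to supply, is the tail estimate of Lemma~\ref{lem_tail}:
\[
\ta(u_+;x_0,r) \;\leq\; c\,\sup_{B_r} u \;+\; c\left(\frac{r}{R}\right)^{\frac{sp}{p-1}} \ta(u_-;x_0,R),
\]
which is proved by testing the weak formulation with $(u-2k)\varphi^p$, $k=\sup_{B_r} u$, and crucially exploits that $u$ is a \emph{solution}, not merely a supersolution. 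Once this is available, one substitutes it into the sup estimate, interpolates $u^p\leq(\sup u)^{p-s}u^s$, applies Young's inequality, and runs the Giaquinta--Giusti iteration (Lemma~\ref{lem_gg}) to pass from $L^p$ to $L^q$ for arbitrarily small $q>0$, after which the match with the $L^\varepsilon$-weak Harnack of Lemma~\ref{est_inf} is straightforward. Without Lemma~\ref{lem_tail}, or an equivalent control on $\ta(u_+)$, the chain ``$\sup_{B_{r/2}} w \leq c\,\bar w$ modulo a tail'' does not terminate in a tail of $u_-$ only, and \eqref{cloe} does not follow.
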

It is worth remarking that in the case when $u$ is nonnegative in the whole~$\R^n$, the inequality in~\eqref{cloe} reduces to the classical Harnack inequality.
\vspace{1.5mm}

We also consider the situation when the function $u$ is merely a weak supersolution to problem~\eqref{problema}; see Definition~\ref{def_weak} below. In analogy to the local case $s=1$, we prove a weak Harnack inequality.
\begin{thm}[\bf Nonlocal weak Harnack inequality]\label{thm_weak}
For any $s\in (0,1)$ and any $p \in (1,\infty)$, let $u\in W^{s,p}(\mathds{R}^n)$ be a weak supersolution to~\eqref{problema} such that $u\geq 0$ in $B_R\equiv B_R({x_0})\subset\Omega$. Then the following estimate holds for any $B_{r}\equiv B_{r}({x_0})\subset B_{R/2}(x_0)$ and for any $t< (p-1)n/(n-sp)$ with $1<p<n/s$,
\begin{equation}\label{eq1}
\left( \dashint_{B_r} u^t \right)^{\!\frac{1}{t}} \, \leq \,
c \inf_{B_{2r}} u 
+ c \left(\frac{r}{R}\right)^{\frac{sp}{p-1}} \text{\rm Tail}(u_-; {x_0},R),
\end{equation}
where ${\rm Tail}(\cdot)$ is defined in \eqref{tail_int}, $u_-=\max\{-u, 0\}$ is the negative part of the function~$u$, and the constants $c$ depend only on $n$, $p$, $s$, $\lambda$ and $\Lambda$.
\end{thm}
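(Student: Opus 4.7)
The plan is to adapt the classical Moser iteration coupled with the Bombieri--Giusti bridging scheme to the nonlocal degenerate setting, keeping careful track of the tail term at every scale. Since $u$ is only nonnegative inside $B_R$ and not globally, the natural object to iterate on is the shifted function
$$\bar u(x) := u(x) + d, \qquad d := \left(\frac{r}{R}\right)^{sp/(p-1)}\ta(u_-; x_0, R),$$
which is strictly positive throughout $B_R$ and into whose additive constant $d$ the contribution of $u_-$ outside $B_R$ is absorbed. It will then be enough to prove the cleaner tail-free statement
$$\left(\dashint_{B_r}\bar u^t\right)^{\!1/t} \leq c \inf_{B_{2r}}\bar u,$$
since inequality \eqref{eq1} follows by expanding $\bar u$ and noting that the term $c\,d$ is precisely the tail factor appearing there.

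The first ingredient is the nonlocal Caccioppoli estimate for powers $\bar u^{q}$, obtained by testing the supersolution equation with $\phi = \bar u^{q}\eta^p$ for appropriate real $q$ and a smooth cut-off $\eta$, and exploiting the algebraic inequality
$$|a-b|^{p-2}(a-b)(a^q - b^q) \geq c_{p,q}\big|a^{(q+p-1)/p}-b^{(q+p-1)/p}\big|^p, \qquad a,b > 0.$$
Coupled with the fractional Sobolev embedding $W^{s,p}\hookrightarrow L^{p^\ast_s}$, $p^\ast_s = np/(n-sp)$, this produces, for $\gamma := q+p-1 \in (0, p-1)$ and on concentric balls $B_\rho \subset B_{\rho'} \subset B_R$, a reverse-Hölder estimate whose ratio between consecutive exponents is $p^\ast_s/p = n/(n-sp)$. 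Moser iterating upward, and exploiting the freedom to choose $\gamma$ in the full open interval $(0, p-1)$ at each step, yields
$$\left(\dashint_{B_r}\bar u^t\right)^{\!1/t} \leq C(t,t_0)\left(\dashint_{B_{3r/2}}\bar u^{t_0}\right)^{\!1/t_0}, \qquad 0 < t_0 < t < \frac{(p-1)n}{n-sp},$$
with the upper limit on $t$ forced by the admissibility constraint $\gamma < p-1$. A symmetric downward Moser iteration, corresponding to choices of $q$ producing negative $(q+p-1)$, produces, on a slightly smaller pair of nested balls, $\inf_{B_{2r}}\bar u \geq c(t_0)\big(\dashint_{B_{3r/2}}\bar u^{-t_0}\big)^{-1/t_0}$ for every sufficiently small $t_0 > 0$.

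The bridge between positive and negative powers is provided by the nonlocal logarithmic estimate developed in the proof of Theorem~\ref{harnack}, which ensures that $\log \bar u$ has uniformly bounded mean oscillation on $B_{7r/4}$ (once again the tail being absorbed by the choice of $d$). Applying the Bombieri--Giusti abstract lemma, or equivalently John--Nirenberg to $\log \bar u$, produces a small $t_0 > 0$ for which
$$\left(\dashint_{B_{3r/2}}\bar u^{t_0}\right)^{\!1/t_0}\leq C\left(\dashint_{B_{3r/2}}\bar u^{-t_0}\right)^{\!-1/t_0},$$
and chaining the three displayed inequalities completes the proof. The main obstacle is the delicate bookkeeping of the tail contribution throughout the infinite iteration: each Caccioppoli step emits a tail term evaluated on a slightly enlarged ball and weighted by a negative power of the radius gap, and these contributions must accumulate into a summable series lest the constants blow up. The calibration $d = (r/R)^{sp/(p-1)}\ta(u_-; x_0, R)$ is designed precisely so that, at every iteration scale $\rho_k \leq 2r \leq R$, the emerging tail is dominated by $c\,d^{p-1}$, keeping all the Moser constants uniform and allowing the iteration to close.
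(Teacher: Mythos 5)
Your outline is essentially correct but follows a genuinely different route than the paper for the crucial infimum estimate. The opening moves coincide: you shift to $\bar u = u + d$ with the same calibration of $d$, prove a Caccioppoli inequality for powers $\bar u^{(p-q)/p}$ (this is the paper's Lemma~\ref{cacio2}), plug it into the fractional Sobolev inequality, and run an upward Moser iteration to reach the admissibility threshold $t < (p-1)n/(n-sp)$ — all of which matches the paper's Section~\ref{sec_weak}. Where you diverge is in bridging from small positive moments down to $\inf_{B_{2r}} u$. You propose the classical Moser--Trudinger scheme: a downward iteration on negative powers (which would require extending the Caccioppoli estimate to test exponents $q > p$, i.e.\ to negative powers of $\bar u$ — provable by the same algebra but not established in the paper), followed by a BMO estimate for $\log\bar u$ and a John--Nirenberg or Bombieri--Giusti crossing from $(\dashint\bar u^{t_0})^{1/t_0}$ to $(\dashint\bar u^{-t_0})^{-1/t_0}$. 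The paper instead invokes Lemma~\ref{est_inf}, whose proof is of an entirely different flavor: it relies on the expansion of positivity of Lemma~\ref{thm_exp2} (a De Giorgi--type level-set iteration seeded by the density estimate of Lemma~\ref{thm_exp}), then the Krylov--Safonov covering Lemma~\ref{lem_KS} to propagate a local density of positivity, and finally Cavalieri's principle to convert the resulting weak-$L^\beta$ bound into an integral estimate — this is precisely the DiBenedetto--Trudinger scheme the authors announce they are transporting to the nonlocal setting, and it pointedly avoids John--Nirenberg. Both approaches should succeed; yours is arguably more familiar but requires the extra Caccioppoli lemma for negative powers and a careful fractional BMO/John--Nirenberg argument, while the paper's route keeps the measure-theoretic machinery already built in Section~\ref{sec_positivity} and reuses Lemma~\ref{est_inf} verbatim from the proof of the full Harnack inequality. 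Your observation that the fixed shift $d = (r/R)^{sp/(p-1)}\ta(u_-;x_0,R)$ dominates the emitted tail uniformly across all sub-scales is correct and is the same mechanism the paper exploits (compare with~\eqref{d2} and the tail absorption inside the proof of Lemma~\ref{thm_exp}).
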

As expected,  the contribution given by the nonlocal tail have again to be considered and the result is analogous to the local case if $u$ is nonnegative in the whole~$\R^n$.

\vspace{1.5mm}

For what concerns the main topic in the present paper, i.~\!e., Harnack-type inequalities for the minimizers of~\eqref{funzionale},  a few words for {\it the linear case when $p=2$} 
 have to be said since very interesting results arise comparing to the classic local case when~$s=1$. Firstly, the analog standard elliptic Harnack inequality can be easily derived using Poisson kernels by requiring the minimizers~$u$ to be {\it nonnegative in the whole $\R^n$}. This restriction is evidently very strong  and it also precludes to establish many consequences really not needing such positivity of the solutions to~\eqref{problema}. For instance, even the possibility to directly derive the  tightly related H\"older regularity estimates is ruled out, although it is well known that in the local case both Harnack and H\"older statements are equivalent for a large class of problems.
 For this, during the last decades, the validity of the classical Harnack inequality without extra positivity assumptions has been an open problem in a nonlocal setting, and more in general for integro-differential operators of the form in~\eqref{lisa}. An answer has been recently given by Kassmann, who provided a simple counter-example by showing that positivity cannot be dropped nor relaxed even in the most simple case when~$\mathcal{L}$ coincides with the fractional Laplacian~$(-\Delta)^s$; see Theorem~1.2 in~\cite{Kas07b}. The same author proposed a new formulation of the Harnack inequality without requiring the additional positivity on the whole $\R^n$ by adding an extra term, basically a natural {\it tail} contribution on the right hand-side, which takes into account the nonlocality of the fractional Laplacian, for any $s\in (0,1)$; see Theorem~3.1 in~\cite{Kas11}.

\vspace{1.5mm}

Here,  we will deal with a larger class of operators whose kernel $K$ is not necessarily symmetric, with only measurable coefficients, and, above all, satisfying fractional differentiability  {\it for any} $s\in (0,1)$ and $p$-summability   {\it for any} $p>1$. For this, we will have to handle not only the usual nonlocal character of such fractional operators, as for instance in the aforementioned papers~\cite{Kas07b,Kas11}, but also the difficulties given by the corresponding nonlinear behavior. 
As a consequence, we can make use neither of the powerful ``$s$-harmonic extension framework'' provided by  Caffarelli-Silvestre  in~\cite{CS07}, nor of various  tools as, e.~\!g., 
the sharp 3-commutators estimates introduced in~\cite{DLR11} to deduce the regularity of weak fractional harmonic maps,
the strong barriers and density estimates in~\cite{SV13a,SV13b,PSV13}, 
the commutator and energy estimates in~\cite{PP13,PS13},
 and so~on. Indeed, the aforementioned tools seem not to be trivially adaptable to a nonlinear framework; also, increasing difficulties are due to the non-Hilbertian structure of the involved fractional Sobolev spaces~$W^{s,p}$ when $p$ is different than 2. In fact, we develop a nonlocal counterpart for the seminal paper by DiBenedetto-Trudinger~\cite{DT84}.

\vspace{1.5mm}

Finally, a great attention has been focused on the study of problems involving fractional Sobolev spaces and corresponding nonlocal equations, both from a pure mathematical point of view and for concrete applications, since they naturally arise in many different contexts (see for instance~\cite{DPV12} for an elementary introduction to this topic and for a wide list of related references). However, for regularity and related results for the minimizers of this kind of operators when $p\neq2$, the theory seems to be rather incomplete. Nonetheless, some partial results are known.
It is worth citing the higher regularity contributions in the case when $s$ is close to $1$ proven in the interesting papers~\cite{BCF12,IN10},  recently extended in some extents by the authors in~\cite{DKP13} for any $s\in (0,1)$; see, also,~\cite{CLM12} for related existence and uniqueness results in the case when $p$ goes to infinity.
 Also, we would like to mention the analysis in the papers~\cite{BLP13,FP14,IS14} where some basic results for fractional $p$-eigenvalues have been proven.
 
\vspace{2.5mm}

The paper is organized as follows. In Section~\ref{sec_preliminaries} below, we fix the notation by also recalling some recent results on the fractional $p$-minimizers and some classical tools. Section~\ref{sec_positivity} is devoted to a nonlocal expansion of positivity in order to accurately estimate the infimum of the superminima of~\eqref{funzionale}. In Section~\ref{sec_harnack}, we are finally able to complete the proof of Theorem~\ref{harnack}.
In Section~\ref{sec_weak}, we shall prove the weak Harnack inequality given by Theorem~\ref{thm_weak}.

\vspace{2mm}
\section{Preliminaries}\label{sec_preliminaries}

In this section we state the general assumptions on the quantity we are dealing with. We keep these assumptions throughout the paper.

\vspace{1mm}

The {\it kernel} $K:\R^n\times \R^n \to [0,\infty)$ is a measurable function such that
\begin{equation}\label{hp_k}
\lambda \leq K(x,y)|x-y|^{n+sp} \leq \Lambda \ \text{for almost} \ x, y \in \R^n,
\end{equation}
for some $s\in (0,1)$, $p>1$, $\lambda\geq\Lambda \geq1$. 
We notice that the assumption on $K$ can be weakened as follows
\begin{equation}\label{hp_k1}
\lambda \leq K(x,y)|x-y|^{n+sp} \leq \Lambda \ \text{for almost} \ x, y \in \R^n \ \text{s.~\!t.} \ |x-y| \leq 1,
\end{equation}
\begin{equation}\label{hp_k2}
0\leq K(x,y)|x-y|^{n+\eta} \leq M \ \text{for almost} \ x, y \in \R^n \ \text{s.~\!t.} \ |x-y| > 1,
\end{equation}
for some $s, \lambda, \Lambda$ as above, $\eta>0$ and $M\geq 1$, as seen, e.~\!g., in the recent papers by Kassmann (see also the more general assumptions in 
\cite{Kas11}). For the sake of simplicity, we will work under the assumption in~\eqref{hp_k}; the assumptions in~\eqref{hp_k1}-\eqref{hp_k2} would bring no relevant differences in all the proofs in the rest of the paper.
 
Now we recall the definition of the fractional Sobolev spaces, denoted by $W^{s,p}(\mathds{R}^n)$. For any $p\in[1,\infty)$ and $s\in(0,1)$
$$
W^{s,p}(\mathds{R}^n):=\left\{ v\in L^{p}(\mathds{R}^n)\,:\, \frac{|v(x)-v(y)|}{|x-y|^{\frac np+s}}\in L^p(\mathds{R}^n\times \mathds{R}^n)\right\};
$$ 
i.~\!e., an intermediary Banach space between $L^p(\mathds{R}^n)$ and $W^{1,p}(\mathds{R}^n)$ endowed with the natural norm
$$
\|v\|_{W^{s,p}(\mathds{R}^n)}:=\left(\int_{\mathds{R}^n} |v|^p\,{\rm d}x+\int_{\mathds{R}^n}\int_{\mathds{R}^n}\frac{|v(x)-v(y)|^p}{|x-y|^{n+sp}}\,{\rm d}x{\rm d}y\right)^{\frac1 p}.
$$
In a similar way, it is possible to define the fractional Sobolev spaces $W^{s,p}(\Omega)$ in a domain $\Omega \subset\mathds{R}^n$. 
For the basic properties of these spaces and some related topics we refer to \cite{DPV12} and the references therein. 

\vspace{2mm}

For any $u,v \in W^{s,p}(\R^n)$, we consider the functional $\mathcal{E}$ defined by
\begin{equation*}\label{def_energia}
\mathcal{E}(u,v):= \int_{\R^n}\!\int_{\R^n} K(x,y)|u(x)-u(y)|^{p-2}(u(x)-u(y)) (v(x)-v(y))\dxy.
\end{equation*}

Suppose that $u$ and $\varphi$ are sufficiently smooth, take e.~\!g. $C_0^\infty(\R^n)$, and define the linear operator $\mathcal{L}$ as the one satisfying the relation
$$
\langle \mathcal{L}u, \varphi\rangle \, = \, \mathcal{E}(u,\varphi).
$$
Thus, assuming that $K$ satisfies \eqref{hp_k}, for any $u \in W^{s,p}(\R^n)$, we have
$$
\mathcal{L}u(x) = P.~\!V. \int_{\R^n}K_{\text{\rm sym}}(x,y)|u(x)-u(y)|^{p-2}(u(x)-u(y))\,{\rm d}y, \quad x\in \R^n,
$$
up to a multiplicative constant; $P.~\!V.$ being a commonly used abbreviation for ``in the principal value sense''. 

\vspace{1mm}

Let $\Om\subset \R^n$ be a bounded open set in $\mathds{R}^n$. Let $g\in W^{s,p}(\mathds{R}^n)$, we are interested in weak solutions to the following class of integro-differential equations
\begin{equation}\label{pb}
\dys
\begin{cases}
\mathcal{L} u = 0 & \text{in} \ \Omega, \\[1ex] 
u = g  & \text{in} \ \R^n\setminus\Omega.
\end{cases}
\end{equation}

As customary, a function $u\in W^{s,p}(\mathds{R}^n)$ is a solution to~\eqref{pb} if  
$\mathcal{E}(u,\varphi)=0$ for all test function $\varphi \in C^\infty_0(\Om)$. Moreover, if we consider the following functional
\begin{equation}\label{def_F}
\mathcal{F}(v)=\int_{\mathds{R}^n}\int_{\mathds{R}^n} K(x,y)|v(x)-v(y)|^p\,{\rm d}x{\rm d}y,
\end{equation}
thanks to the assumptions \eqref{hp_k} on the kernel $K$, there exists a unique $p$-minimizer $u$ of $\mathcal{F}$ over all $v \in W^{s,p}(\mathds{R}^n)$ such that $v=g$ in $\mathds{R}^n\setminus \Omega$ and it is a weak solutions to problem \eqref{pb} and vice versa; see Theorem~2.3 in \cite{DKP13}.

We conclude this section by recalling the definitions of weak subsolution and weak supersolution to problem~\eqref{pb}. Before, we define, for a given $g\in W^{s,p}(\mathds{R}^n)$, the convex sets of $W^{s,p}(\mathds{R}^n)$ as
$$
\mathcal{K}_{g}^{\pm}(\Omega):=\{v\in W^{s,p}(\mathds{R}^n)\,:\, (g-v)_{\pm}\in W^{s,p}_0(\Omega)\}
$$ 
and
$$
\mathcal{K}_{g}(\Omega):=\mathcal{K}_{g}^{+}(\Omega)\cap \mathcal{K}_{g}^{-}(\Omega)=\{v\in W^{s,p}(\mathds{R}^n)\,:\, v-g\in W^{s,p}_0(\Omega)\},
$$ 
where we denoted by $W^{s,p}_0(\Omega)$  the closure of $C^\infty_0(\Omega)$ in the norm $\|\cdot\|_{W^{s,p}(\Omega)}$. We underline that the functions in the space $W^{s,p}_0(\Omega)$ are defined in the whole space, since they are considered to be extended to zero outside $\Omega$.
\begin{defn}\label{def_weak}
Let $g\in W^{s,p}(\mathds{R}^n)$. A function $u\in \mathcal{K}^-_g$ is a \textit{weak subsolution} to problem \eqref{pb} if 
\begin{equation*}\label{weak_sub}
\int_{\mathds{R}^n}\int_{\mathds{R}^n} K(x,y)|u(x)-u(y)|^{p-2}(u(x)-u(y))(\eta(x)-\eta(y))\,{\rm d}x{\rm d}y\leq 0
\end{equation*}
for every nonnegative $\eta\in W^{s,p}_0(\mathds{R}^n)$.
\\ A function~$u\in \mathcal{K}^+_g$ is a \textit{weak supersolution} to problem \eqref{pb} if 
\begin{equation}\label{weak_super}
\int_{\mathds{R}^n}\int_{\mathds{R}^n} K(x,y)|u(x)-u(y)|^{p-2}(u(x)-u(y))(\eta(x)-\eta(y))\,{\rm d}x{\rm d}y\geq 0
\end{equation}
for every nonnegative $\eta\in W^{s,p}_0(\mathds{R}^n)$.
As customary, a function~$u\in \mathcal{K}_g$ is a \textit{weak solution} to problem \eqref{pb} if it is both a sub and a supersolution; that is,
\begin{equation*}\label{weak_sol}
\int_{\mathds{R}^n}\int_{\mathds{R}^n} K(x,y)|u(x)-u(y)|^{p-2}(u(x)-u(y))(\eta(x)-\eta(y))\,{\rm d}x{\rm d}y=0
\end{equation*} 
for every $\eta\in W^{s,p}_0(\mathds{R}^n)$.
\end{defn}
 \vspace{2mm}
 
Similarly, it is possible to define sub- and superminimizers of \eqref{def_F}, see for instance Definition~2.2 in \cite{DKP13}.

\subsection{Notation}\label{sec_notation}

Before starting with the proofs, it is convenient to fix some notation which will be used throughout the rest of the paper. Firstly, notice that we will follow the usual convention of denoting by $c$ a general positive constant which will not necessarily be the same at different occurrences and which can also change from line to line. For the sake of readability, dependencies of the constants will  be often omitted within the chains of estimates, therefore stated after the estimate. 
Relevant dependences on parameters will be emphasized by using parentheses; special constants will be denoted by $c_0$, $c_1$,...

As customary, we denote by
$$
B_R({x_0})=B({x_0}; R):=\{x\in\mathds{R}^n : |x-{x_0}|<R\}
$$
the open ball centered in ${x_0}\in\mathds{R}^n$ with radius $R>0$. When not important and clear from the context, we shall use the shorter notation $B_R:=B({x_0};R)$. 
Moreover, if $f\in L^1 (S)$ and the $n$-dimensional Lebesgue measure $|S|$ of the set $S\subseteq \mathds{R}^n$ is finite and strictly positive, we write
\begin{equation*}\label{mean}
(f)_S:=\dashint_{S} f(x)\,{\rm d}x=\frac{1}{|S|}\int_S f(x)\,{\rm d}x.
\end{equation*}

Let $k>0$ and $S\subseteq \mathds{R}^n$, we denote by
\begin{equation}\label{w+}
w_+(x):=(u(x)-k)_+=\max\big\{ u(x)-k,0\big\}, 
\end{equation}
and
\begin{equation}\label{w-}
w_-(x):=(u(x)-k)_-=(k-u(x))_+,  
\end{equation}
for any $x\in S$.
Clearly $w_+(x)\neq 0$ in the set $\big\{x\in S:u(x)>k\big\}$, and $w_-(x)\neq 0$ in the set $\big\{x\in S:u(x)<k\big\}$. 

Finally, in order to deal also with non symmetric kernels, we define 
\begin{equation}\label{bark}
\bar{K}(x,y)=\max\big\{K(x,y),K(y,x)\big\}.
\end{equation}

We now recall the definition of the {\it nonlocal tail of a function $u$ in the ball $B_{R}({x_0})$}, as seen in~\cite{DKP13}. As mentioned in the introduction, this quantity will play an important role in the rest of the paper. For any $u\in W^{s,p}(\mathds{R}^n)$ and $B_R({x_0})\subset \mathds{R}^n$ we write
\begin{equation}\label{def_tail}
\text{\rm Tail}(u; {x_0}, R) := \left[R^{sp}\left( \int_{\mathds{R}^n \setminus B_R} |u(y)|^{p-1}|y-{x_0}|^{-n-sp}\, {\rm d}y\right)\right]^{\frac{1}{p-1}}.
\end{equation}

\vspace{3mm}
\subsection{Some recent results on the fractional $p$-minimizers}\label{sec_known}

In this section, we recall some recent results for the minimizers of the nonlocal functionals~\eqref{def_F} and hence also for the weak solutions to problem \eqref{pb}, which can be found in~\cite{DKP13}.

\vspace{2mm}

Firstly, we state a general inequality proved in~\cite{DKP13}, which shows that the natural extension of the Caccioppoli inequality to the nonlocal framework has to take into account a suitable {\it tail}. For other fractional Caccioppoli-type inequalities, see also~\cite{Min07,Min11} and \cite{FP14}.
\begin{thm}{\rm (\cite[Theorem~1.4]{DKP13}).}\label{lem_caccio}
Let $p \in [1,\infty)$ and
let $u\in W^{s,p}(\mathds{R}^n)$ be a weak solution to problem \eqref{pb}.
 Then, for any $B_r\equiv B_r({x_0})\subset \Omega$ and any nonnegative~$\varphi\in C^\infty_0(B_r)$,  the following estimate holds true
\begin{eqnarray}\label{cacio1}
\nonumber && \int_{B_r}\int_{B_r}K(x,y) |w_{\pm}(x)\varphi(x)-w_{\pm}(y)\varphi(y)|^p \,{\rm d}x{\rm d}y\\
 &&\qquad \qquad \quad \leq c\int_{B_r}\int_{B_r} \bar{K}(x,y)
 (\max\{w_{\pm}(x),w_\pm(y)\})^p |\varphi(x)-\varphi(y)|^p\,{\rm d}x{\rm d}y\\
&&\qquad \qquad \quad \quad+\,c \,\int_{B_r}w_{\pm}(x)\varphi^p(x)\,{\rm d}x \left(\sup_{y\,\in\, {\rm supp}\,\varphi}\int_{\mathds{R}^n\setminus B_r}\bar{K}(x,y)w_{\pm}^{p-1}(x)\,{\rm d}x \right)\!, 
\nonumber
\end{eqnarray}
where $w_{\pm}$, $\bar{K}$ are defined in \eqref{w+}-\eqref{w-} and \eqref{bark}  respectively, and $c$ depends only on~$p$.
\end{thm}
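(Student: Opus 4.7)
The plan is to test the weak formulation with the natural truncation $\eta(x)=w_\pm(x)\varphi^p(x)$; this lies in $W^{s,p}_0(B_r)$ since $\varphi\in C_0^\infty(B_r)$ and $u\in W^{s,p}(\R^n)$. Splitting $\R^n\times\R^n=(B_r\times B_r)\cup 2(B_r\times(\R^n\setminus B_r))\cup((\R^n\setminus B_r)\times(\R^n\setminus B_r))$ and exploiting the symmetry implicit in $K_{\mathrm{sym}}$, the last piece contributes zero because $\eta\equiv 0$ outside $B_r$. The identity $\mathcal{E}(u,\eta)=0$ then reads ``local piece $+\,2\times$cross piece $=0$'', so a lower bound on the local piece together with an upper bound on the cross piece will deliver the Caccioppoli estimate.

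The heart of the local part is the pointwise algebraic inequality
\begin{align*}
&|u(x)-u(y)|^{p-2}(u(x)-u(y))\bigl(w_\pm(x)\varphi^p(x)-w_\pm(y)\varphi^p(y)\bigr)\\
&\quad\geq \tfrac{1}{c}\bigl|w_\pm(x)\varphi(x)-w_\pm(y)\varphi(y)\bigr|^p-c\,(\max\{w_\pm(x),w_\pm(y)\})^p|\varphi(x)-\varphi(y)|^p,
\end{align*}
with $c=c(p)$ and $\varphi\geq 0$. By the $(x,y)$-symmetry one may assume $w_\pm(x)\geq w_\pm(y)$. In the region where $u(x)$ and $u(y)$ lie on the same side of the level $k$ one has $u(x)-u(y)=\pm(w_\pm(x)-w_\pm(y))$, and the bound reduces to a scalar inequality of the form $|a-b|^{p-2}(a-b)(a\tau^p-b\sigma^p)\gtrsim |a\tau-b\sigma|^p-(\max\{a,b\})^p|\tau-\sigma|^p$ for $a\geq b\geq 0$ and $\tau,\sigma\geq 0$. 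In the mixed region one has $|u(x)-u(y)|\geq w_\pm(x)$, after which a Young-type split absorbs the remaining contribution. Integrating against $K$ over $B_r\times B_r$ and using \eqref{hp_k} yields the first term on the right-hand side of \eqref{cacio1}. For the cross piece, $\eta(y)=0$ on $\R^n\setminus B_r$ reduces the integrand to $K(x,y)|u(x)-u(y)|^{p-2}(u(x)-u(y))w_\pm(x)\varphi^p(x)$; dominating $|u(x)-u(y)|^{p-1}$ in the far field by a constant multiple of $w_\pm$ evaluated outside $B_r$ (the favourable sign only improves the estimate), and relabeling the integration variables, produces exactly the tail term in \eqref{cacio1}.

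The chief obstacle is the pointwise algebraic inequality above: because $p\in(1,\infty)$ is arbitrary, one must handle separately the degenerate range $p\geq 2$ (where an expansion of $|a-b|^p$ in terms of $|a-b|^{p-2}(a-b)^2$ suffices) and the singular range $1<p<2$ (where $t\mapsto t^{p-1}$ is concave and a more delicate Young-type split is required). In both ranges, the mixed region in which exactly one of $u(x),u(y)$ exceeds $k$ is the genuinely subtle case, and the constants in front of $|\varphi(x)-\varphi(y)|^p$ must be tracked carefully so as to depend only on $p$ and not on $\lambda,\Lambda$.
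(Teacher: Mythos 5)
The paper does not prove this statement: it is imported verbatim as \cite[Theorem~1.4]{DKP13}, and the present paper only uses it as a black box. That said, your sketch is a faithful reconstruction of the standard argument, and the cues the paper itself leaves (the use of the same pointwise lower bound for $J_1$ in the proof of Lemma~\ref{thm_exp2}, and the reference to ``Lemma~3.1 in~\cite{DKP13}'' inside the proof of Lemma~\ref{cacio2} for the elementary inequality $\varphi^p(x)\leq(1+c_p\varepsilon)\varphi^p(y)+(1+c_p\varepsilon)\varepsilon^{1-p}|\varphi(x)-\varphi(y)|^p$) confirm that~\cite{DKP13} proceeds exactly along the lines you propose: test with $\eta=w_\pm\varphi^p$, symmetrize in $(x,y)$, split $\R^n\times\R^n$ into the local block $B_r\times B_r$ and the two cross blocks (the far-far block vanishing since $\eta\equiv 0$ off $B_r$), lower-bound the local integrand pointwise, and bound the cross contribution by the tail term using $u(y)-u(x)\leq w_\pm(y)$ on the unfavourable set when $w_\pm(x)>0$.

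Two small remarks. First, you do not actually need a case split between $p\geq2$ and $1<p<2$; the algebraic inequality cited above (with the choice $\varepsilon=\delta\,|w_\pm(x)-w_\pm(y)|/\max\{w_\pm(x),w_\pm(y)\}$ when this ratio is in $(0,1]$, and the trivial bound otherwise) handles all $p>1$ uniformly, and this is what keeps the constant depending only on $p$. Second, be slightly careful with the statement ``local piece $+\,2\times$cross piece $=0$'': this literally holds only after replacing $K$ by $K_{\mathrm{sym}}$; with the raw (possibly nonsymmetric) $K$ the two cross blocks are distinct, but since you bound both by $\bar K=\max\{K(x,y),K(y,x)\}$ and use $K_{\mathrm{sym}}\geq K/2$ on the lower-bounded local piece, the conclusion is unchanged. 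Neither issue is a gap; your plan is correct.
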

\begin{rem}\label{rem_caccioppoli}
We underline that the estimate in~\eqref{cacio1} holds for $w_+$ also when $u$ is  a weak subsolution to \eqref{pb} and for $w_-$ when $u$ is a weak supersolution to \eqref{pb}.
\end{rem}

As in the local case, the estimate above contains basically all the information deriving from the minimum property of the functions $u$ for what concerns the corresponding H\"older continuity.
A first natural consequence is the local boundedness of both  $p$-subminimizers of~\eqref{def_F} and weak subsolutions to problem \eqref{pb}, as stated below.
\begin{thm}{\rm (\cite[Theorem~1.1 and Remark 4.2]{DKP13}).}\label{thm_local}
Let $p \in [1,\infty)$, let $u\in W^{s,p}(\mathds{R}^n)$ be a weak subsolution to problem~\eqref{pb} and let $B_r\equiv B_r({x_0})  \subset \Omega$. Then the following estimate holds true
\begin{eqnarray}\label{sup_estimate}
\sup_{B_{r/2}}u \, \leq \, c\,\delta\, {\rm Tail}(u_+;{x_0},r/2)+c\, \delta^{-\frac{(p-1)n}{s p^2}} \left(\dashint_{B_r} u_+^p\,{\rm d}x\right)^{\frac 1p},
\end{eqnarray}
where ${\rm Tail}(\cdot)$ is defined in \eqref{def_tail}, $u_+=\max\{u,0\}$ is the positive part of the function $u$, the parameter~$\delta \in (0,1]$, and the constants $c$ depend only on $n$, $p$, $s$, $\lambda$ and~$\Lambda$.
\end{thm}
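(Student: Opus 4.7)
The plan is to perform a De~Giorgi iteration on level sets, driven by the Caccioppoli inequality of Theorem~\ref{lem_caccio} and the fractional Sobolev embedding $W^{s,p}_0(B)\hookrightarrow L^{p^*_s}(B)$. I focus on the subcritical regime $sp<n$ with $p^*_s=np/(n-sp)$; the complementary cases follow from $W^{s,p}\hookrightarrow L^q$ for any $q<\infty$ (resp.\ $C^{s-n/p}$) with only minor modifications. Fix $\delta\in(0,1]$ and introduce a base level $\bar k:=\delta\,\text{\rm Tail}(u_+;x_0,r/2)$, increasing truncation levels $k_j:=\bar k+M(1-2^{-j})$ for a free parameter $M>0$ to be determined, and shrinking radii $\rho_j:=r/2+r\,2^{-j-1}$ (so $\rho_0=r$, $\rho_j\searrow r/2$). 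Pick cutoffs $\varphi_j\in C^\infty_0(B_{\rho_j})$ with $\varphi_j\equiv 1$ on $B_{\rho_{j+1}}$, $\operatorname{supp}\varphi_j\subset B_{(\rho_j+\rho_{j+1})/2}$, and $|\nabla\varphi_j|\leq c\,2^{j+1}/r$, so that $|x-y|\geq c\,r\,2^{-j}$ whenever $y\in\operatorname{supp}\varphi_j$ and $x\in\R^n\setminus B_{\rho_j}$. The iteration is driven by $A_j:=\int_{B_{\rho_j}}(u-k_j)_+^p\,{\rm d}x$.

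Applying Theorem~\ref{lem_caccio} with $w_+=(u-k_j)_+$ and $\varphi=\varphi_j$, then the fractional Sobolev inequality to $(u-k_j)_+\varphi_j\in W^{s,p}_0(B_{\rho_j})$, would yield
\begin{equation*}
\Big(\!\int_{B_{\rho_{j+1}}}\!(u-k_j)_+^{p^*_s}{\rm d}x\Big)^{\!p/p^*_s}\!\!\leq c\,2^{jsp}r^{-sp}A_j+c\,2^{j(n+sp)}r^{-sp}\text{\rm Tail}(u_+;x_0,r/2)^{p-1}\!\!\int_{B_{\rho_j}}\!(u-k_j)_+\varphi_j^p\,{\rm d}x.
\end{equation*}
The first term follows from one-variable integration against $\bar K\leq\Lambda|x-y|^{-n-sp}$; for the tail factor, the geometric buffer combined with splitting $\R^n\setminus B_{\rho_j}=(B_{2r}\setminus B_{\rho_j})\cup(\R^n\setminus B_{2r})$ and the definition of Tail produces the stated $\text{\rm Tail}^{p-1}$ factor.

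The crucial absorption step matches this $\text{\rm Tail}^{p-1}$ against the truncation. Split $\int(u-k_j)_+\varphi_j^p=\int_{\{u>2k_j\}}+\int_{\{k_j<u\leq 2k_j\}}$. On the first set $(u-k_j)_+\geq k_j\geq\bar k$, hence $(u-k_j)_+\leq\bar k^{-(p-1)}(u-k_j)_+^p=\delta^{-(p-1)}\text{\rm Tail}^{-(p-1)}(u-k_j)_+^p$; the outer $\text{\rm Tail}^{p-1}$ is \emph{exactly cancelled}, producing the clean contribution $c\,\delta^{-(p-1)}2^{j(n+sp)}r^{-sp}A_j$. On the second set, Chebyshev at the previous level, $|\{u>k_j\}|\leq(k_j-k_{j-1})^{-p}A_{j-1}=2^{jp}M^{-p}A_{j-1}$, combined with $(u-k_j)_+\leq k_j\lesssim M$, yields a term controlled by a constant times $A_{j-1}$ provided $M\gtrsim\text{\rm Tail}$, which can be enforced by enlarging $M$ a~posteriori (the complementary case $M<\text{\rm Tail}$ forces $\|u_+\|_{L^p(B_r)}$ to be comparably small and is disposed of by a dual global-Chebyshev estimate using $|\{u>k_j\}|\leq\bar k^{-p}\int u_+^p$). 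Converting the $L^{p^*_s}$ bound into one on $A_{j+1}$ via H\"older with $|\{u>k_{j+1}\}|\leq 2^{(j+1)p}M^{-p}A_j$ and using $A_j\leq A_{j-1}$, I arrive at the recursion
\begin{equation*}
A_{j+1}\leq c\,\delta^{-(p-1)}\,b^{j}\,M^{-sp^2/n}\,r^{-sp}\,A_{j-1}^{1+sp/n},\qquad b=b(n,p,s)>1.
\end{equation*}

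Normalising $\hat A_j:=A_j/(M^p r^n)$ turns this into $\hat A_{j+1}\leq c\,\delta^{-(p-1)}b^{j}\hat A_{j-1}^{1+sp/n}$, and the classical De~Giorgi iteration lemma (run along even/odd subsequences to absorb the shift in indices) forces $\hat A_j\to 0$ provided $\hat A_0\leq c_0\delta^{(p-1)n/(sp)}$, equivalently
\begin{equation*}
M\geq c\,\delta^{-(p-1)n/(sp^2)}\Big(\dashint_{B_r}u_+^p\,{\rm d}x\Big)^{\!1/p}.
\end{equation*}
Taking $M$ at this threshold gives $(u-(\bar k+M))_+=0$ a.e.\ on $B_{r/2}$, which is exactly~\eqref{sup_estimate}. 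The main obstacle is the \emph{tail/level matching}: the $\text{\rm Tail}^{p-1}$ intrinsic to the Caccioppoli inequality must be cancelled by the $\bar k^{-(p-1)}$ produced by truncation, which forces the scaling $\bar k\sim\delta\,\text{\rm Tail}$; the surviving $\delta^{-(p-1)}$ then propagates through the De~Giorgi convergence condition—whose nonlinearity exponent is $\alpha=sp/n$—to produce precisely the exponent $(p-1)n/(sp^2)$ of $\delta^{-1}$ appearing in the statement.
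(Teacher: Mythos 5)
This theorem is not proved in the present paper; it is cited verbatim from \cite{DKP13}, so there is no in-text proof to compare against. Your De~Giorgi scheme (Caccioppoli of Theorem~\ref{lem_caccio} plus fractional Sobolev embedding, with a geometrically shrinking family of radii and levels, and the key level/tail matching via $\bar k=\delta\,\text{\rm Tail}(u_+;x_0,r/2)$) is the right strategy, and the exponent bookkeeping is correct: the recursion $\hat A_{j+1}\le c\,\delta^{-(p-1)}b^j\hat A_{j-1}^{1+sp/n}$ with $\hat A_j=A_j/(M^pr^n)$ does reproduce the exponent $(p-1)n/(sp^2)$ after solving the smallness condition for $M$.

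However, there is a genuine gap in the absorption of the second level set $\{k_j<u\le 2k_j\}$. You impose $M\gtrsim\text{\rm Tail}$ and propose to ``enforce it by enlarging $M$ a posteriori.'' But $M$ appears additively in the final bound $\sup_{B_{r/2}}u\le\bar k+M$: if you force $M\ge\text{\rm Tail}$ the conclusion degenerates to $\sup u\le c\,\text{\rm Tail}+c\,\delta^{-(p-1)n/(sp^2)}(\dashint u_+^p)^{1/p}$, which has lost the crucial factor $\delta$ in front of the tail and is strictly weaker than~\eqref{sup_estimate}. The ``complementary case $M<\text{\rm Tail}$'' fallback via a ``dual global-Chebyshev'' is only gestured at and, as far as I can see, does not actually close; the dichotomy is also conceptually off, since $M$ is a free parameter rather than determined by the data. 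The correct threshold is the much smaller $M\gtrsim\bar k=\delta\,\text{\rm Tail}$: since $k_j\le\bar k+M\le\tfrac32 M$ and $\text{\rm Tail}^{p-1}=\delta^{-(p-1)}\bar k^{p-1}\le\delta^{-(p-1)}(M/2)^{p-1}$, one gets directly
\[
\text{\rm Tail}^{p-1}\,k_j\,M^{-p}\;\le\;\tfrac{3}{2^{p}}\,\delta^{-(p-1)},
\]
so the second-set contribution is again $\lesssim\delta^{-(p-1)}2^{jp}A_{j-1}$, matching the first-set piece, and the recursion proceeds with no extra constraint. Choosing $M=\max\{2\delta\,\text{\rm Tail},\,c\,\delta^{-(p-1)n/(sp^2)}(\dashint_{B_r}u_+^p)^{1/p}\}$ then yields exactly~\eqref{sup_estimate}, and the fallback argument becomes unnecessary. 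With that correction (and the cosmetic fix that the even/odd subsequence handling of the index shift should be stated explicitly), your argument is sound.
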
 
Combining Theorem~\ref{lem_caccio} together with a nonlocal {\it Logarithmic-Lemma} (see \cite[Lem-~\break ma~1.3]{DKP13}), one can prove that the both $p$-minimizers and weak solutions enjoy an oscillation estimates, which naturally yields the desired H\"older continuity. Once again, the tail contribution given by the nonlocal form of the involved operators has to be taken into account (see~\cite[Theorem 1.2]{DKP13}).

\vspace{2mm}

\subsection{Classical technical tools}\label{sec_classic}

In this section we collect some classical tools that will be useful in the proofs of the main results of the paper.

Below, a Krylov-Safonov covering lemma, whose proof can be found, for instance, in~\cite[Lemma 7.2]{KS01}. 
We have 
\begin{lemma}\label{lem_KS}
Let $E\subset B_r({x_0})$ a measurable set. Let $\bar{\delta}\in (0,1)$, and define
\begin{equation}\label{E_delta}
[E]_{\bar{\delta}}:= \bigcup_{\rho>0} \left\{B_{3\rho}(x)\cap B_r({x_0}),\,\,\, x\in  B_r({x_0}) \,:\, |E\cap B_{3\rho}(x) |> \delta |B_\rho(x)|\right\}.
\end{equation}
Then, either
$$
\textrm{ i\,{\rm)}} \ \left|[E]_{\bar{\delta}}\right|\geq \frac{c_3}{\bar{\delta}} |E|
$$
or  
$$
\hspace{-0.5cm}\textrm{ ii\,{\rm)}}\ [E]_{\bar{\delta}}= B_{r}({x_0}),
$$
where $c_3=c_3(n)$.
\end{lemma}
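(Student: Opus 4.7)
My approach will follow a Vitali/Besicovitch-style covering argument in the spirit of the classical Krylov--Safonov growth lemma. Assume alternative (ii) fails, i.e.\ $B_r(x_0)\setminus [E]_{\bar{\delta}}\neq \emptyset$; the goal is to derive (i). For a.e.\ $x\in E$ (a Lebesgue density point of $E$) set
\begin{equation*}
\rho_x \;:=\; \sup\bigl\{\rho>0 : |E\cap B_{3\rho}(x)| > \bar{\delta}\,|B_\rho(x)|\bigr\},
\end{equation*}
which is positive by Lebesgue density and finite since $E\subset B_r(x_0)$. A key preliminary observation is $3\rho_x \leq r+|x-x_0|$, hence $\rho_x \leq 2r/3$: otherwise, for some $\rho$ slightly below $\rho_x$ the density condition would still hold and $B_{3\rho}(x)\supset B_r(x_0)$, forcing $B_r(x_0) = B_{3\rho}(x)\cap B_r(x_0)\subset [E]_{\bar{\delta}}$, contradicting the failure of (ii). Continuity of Lebesgue measure in the radius then yields $|E\cap B_{3\rho_x}(x)| = \bar{\delta}\,|B_{\rho_x}(x)|$, and by definition of $\rho_x$ together with this continuity one also has $B_{3\rho_x}(x)\cap B_r(x_0)\subset [E]_{\bar{\delta}}$.

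Next, I apply the Besicovitch covering theorem to the family $\{B_{\rho_x}(x)\}_x$, whose radii are uniformly bounded by $2r/3$. This produces $N=N(n)$ countable pairwise-disjoint subfamilies $\mathcal{F}_1,\dots,\mathcal{F}_N$ whose union covers a full-measure subset of $E$. From $|E| \leq \sum_{j=1}^N \sum_{B\in \mathcal{F}_j} |E\cap B|$ (using disjointness inside each $\mathcal{F}_j$) and pigeonhole, one subfamily $\mathcal{F}=\{B_{\rho_k}(x_k)\}_k$ (writing $\rho_k:=\rho_{x_k}$) satisfies $\sum_k |E\cap B_{\rho_k}(x_k)|\geq |E|/N$. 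The crucial density-inflation step is then
\begin{equation*}
|E\cap B_{\rho_k}(x_k)| \;\leq\; |E\cap B_{3\rho_k}(x_k)| \;=\; \bar{\delta}\,|B_{\rho_k}(x_k)|,
\end{equation*}
which, summed over $k$, produces $\sum_k|B_{\rho_k}(x_k)|\geq |E|/(N\bar{\delta})$. This is the point at which the factor $1/\bar{\delta}$ appears, and it is the main reason Besicovitch (rather than plain Vitali with enlargement) is needed: one wants the density identity to be used in the direction that inflates volumes.

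To finish, each $B_{\rho_k}(x_k)\cap B_r(x_0)\subset B_{3\rho_k}(x_k)\cap B_r(x_0)\subset [E]_{\bar{\delta}}$ by the first paragraph, and the $B_{\rho_k}(x_k)$ are pairwise disjoint, so
\begin{equation*}
|[E]_{\bar{\delta}}| \;\geq\; \sum_k |B_{\rho_k}(x_k)\cap B_r(x_0)| \;\geq\; c(n) \sum_k |B_{\rho_k}(x_k)| \;\geq\; \frac{c(n)}{N\,\bar{\delta}}\,|E|,
\end{equation*}
which is (i) with $c_3=c(n)/N$. The middle inequality uses the elementary dimensional fact that whenever $x_k\in B_r(x_0)$ and $\rho_k\leq r$, a fixed fraction (depending only on $n$) of $B_{\rho_k}(x_k)$ sits inside $B_r(x_0)$. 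The most delicate part of the argument --- and the step I expect to require the most care --- is precisely this boundary estimate: configurations with $x_k$ close to $\partial B_r(x_0)$ and $\rho_k$ comparable to $r$ must be handled by an explicit spherical-cap volume computation, using that the bound $\rho_k\leq 2r/3$ obtained in the first paragraph keeps us away from degenerate cases.
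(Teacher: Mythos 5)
The paper does not give its own proof here; it cites \cite[Lemma~7.2]{KS01}, whose argument is set in a metric measure space with a doubling measure and is Vitali-based. Your Besicovitch-covering proof is a correct Euclidean alternative, and all the nontrivial steps check out: Lebesgue density gives $\rho_x>0$ for a.e.\ $x\in E$; the set $S_x:=\{\rho>0 : |E\cap B_{3\rho}(x)|>\bar\delta|B_\rho(x)|\}$ is open by continuity of $\rho\mapsto|E\cap B_{3\rho}(x)|-\bar\delta|B_\rho(x)|$, so the supremum $\rho_x$ is not attained and satisfies $|E\cap B_{3\rho_x}(x)|=\bar\delta|B_{\rho_x}(x)|$; the bound $3\rho_x\le r+|x-x_0|<2r$ is exactly what failure of (ii) forces; and since $B_{3\rho_x}(x)=\bigcup_{\rho\in S_x}B_{3\rho}(x)$, the inclusion $B_{3\rho_x}(x)\cap B_r(x_0)\subset[E]_{\bar\delta}$ holds. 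Two points in your commentary are overstated, though neither affects correctness. First, Besicovitch is not actually essential: a Vitali $5$-enlargement works just as well, because the \emph{maximality} of $\rho_{x_k}$ gives $|E\cap B_{3\rho}(x_k)|\le\bar\delta|B_\rho(x_k)|$ for \emph{all} $\rho\ge\rho_{x_k}$, hence $|E\cap B_{5\rho_k}(x_k)|\le\bar\delta(5/3)^n|B_{\rho_k}(x_k)|$ and the same lower bound $\sum_k|B_{\rho_k}(x_k)|\ge c(n)|E|/\bar\delta$ follows from a disjoint Vitali subfamily; this is essentially the route in \cite{KS01}, where Besicovitch is unavailable. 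Second, the boundary estimate $|B_\rho(x)\cap B_r(x_0)|\ge 2^{-n}|B_\rho(x)|$ is elementary for any $x\in B_r(x_0)$ and any $\rho<2r$: translate $x$ toward $x_0$ by $\min\{\rho/2,|x-x_0|\}$ to obtain a center $w$ with $B_{\rho/2}(w)\subset B_\rho(x)\cap B_r(x_0)$, so no spherical-cap computation is needed and your restriction $\rho_k\le 2r/3$ is convenient but far from sharp.
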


Two well-known iteration lemmata are also needed.
\begin{lemma}\label{lem_iter} {\rm (see, e.~\!g., \cite[Lemma 7.1]{Giu03}).}
Let $\beta>0$ and let $\{ A_j\}$ be a sequence of real positive numbers such that
$$
A_{j+1} \, \leq \, c_0\, b^j A_j^{1+\beta}
$$
with $c_0>0$ and $b>1$. 
\noindent
\\ If $\dys A_0 \, \leq \, c_0^{-\frac{1}{\beta}} \,b^{-\frac{1}{\beta^2}}$, then we have
$$
A_j \, \leq \, b^{-\frac{j}{\beta}}\,A_0 ,
$$
which in particular yields $\dys \lim_{j\to \infty} A_j = 0$.
\end{lemma}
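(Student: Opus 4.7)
The statement is a textbook iteration lemma, and the natural approach is a direct induction on $j$ showing that the stronger quantitative bound $A_j \leq b^{-j/\beta} A_0$ is preserved by the recursion $A_{j+1} \leq c_0 b^j A_j^{1+\beta}$. The base case $j=0$ holds trivially since $b^0=1$, so essentially all the work sits in the inductive step and in identifying why the particular smallness threshold $A_0 \leq c_0^{-1/\beta} b^{-1/\beta^2}$ is exactly the right one.

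For the inductive step, assuming $A_j \leq b^{-j/\beta} A_0$, I would substitute into the hypothesis to obtain
\[
A_{j+1} \;\leq\; c_0 b^j A_j^{1+\beta} \;\leq\; c_0 b^j \bigl(b^{-j/\beta} A_0\bigr)^{1+\beta} \;=\; c_0 \, b^{\,j - j(1+\beta)/\beta}\, A_0^{1+\beta} \;=\; c_0 \, b^{-j/\beta}\, A_0^{1+\beta}.
\]
To reach the desired bound $b^{-(j+1)/\beta} A_0 = b^{-j/\beta}\, b^{-1/\beta}\, A_0$, it then suffices to verify $c_0 A_0^{\beta} \leq b^{-1/\beta}$, which upon raising to the power $1/\beta$ is exactly the hypothesis $A_0 \leq c_0^{-1/\beta} b^{-1/\beta^2}$. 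This is the one place where the precise threshold is consumed, and it is also the only piece of algebra that is not mechanical: the explicit form of the threshold is \emph{forced} by matching the exponents of $b$ that arise from the $b^j$ factor in the recursion and the $(1+\beta)$-fold self-composition.

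Once the inductive bound $A_j \leq b^{-j/\beta} A_0$ is established for all $j \geq 0$, the final assertion $A_j \to 0$ follows from $b>1$ and $\beta>0$, so that $b^{-j/\beta} \to 0$ as $j\to\infty$. I do not anticipate any genuine obstacle beyond keeping track of the exponents of $b$; the only subtlety worth flagging in the write-up is that the bound proved by induction is strictly stronger than $A_j \to 0$ (it is geometric with explicit rate), and that strengthening is what makes the induction self-sustaining — a weaker inductive hypothesis would not propagate through the superlinear term $A_j^{1+\beta}$.
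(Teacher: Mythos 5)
Your proof is correct. The paper does not include its own proof of this lemma — it simply cites \cite[Lemma 7.1]{Giu03} — and your induction argument is precisely the standard one found there: the base case is trivial, the inductive step reduces to the inequality $c_0 A_0^{\beta} \leq b^{-1/\beta}$, and that is exactly what the smallness hypothesis on $A_0$ provides.
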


\begin{lemma} {\rm (see, e.~\!g., \cite[Lemma 1.1]{GG82}).}\label{lem_gg}
Let $f=f(t)$ be a nonnegative bounded function defined for $0 \leq T_0 \leq t \leq T_1$. Suppose that for $T_0 \leq t < \tau \leq T_1$ we have
$$
f(t) \, \leq \, c_1(\tau-t)^{-\theta} + c_2 + \zeta f(\tau),
$$
where $c_1$, $c_2$, $\theta$ and $\zeta$ are nonnegative constants, and $\zeta < 1$. Then there exists a constant $c$, depending only on $\theta$ and $\zeta$, such that for every $\rho$, $R$, $T_0 \leq \rho < R \leq T_1$, we have
$$
f(\rho) \, \leq \, c\left[c_1\,(R-\rho)^{-\theta}+ c_2\right].
$$
\end{lemma}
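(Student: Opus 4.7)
The plan is to iterate the hypothesis along a geometric sequence of intermediate points between $\rho$ and $R$, chosen so that the cascading $\zeta f(\tau)$ terms collapse into a convergent geometric series, while the diverging factors $(\tau-t)^{-\theta}$ contribute only a controlled multiple of $(R-\rho)^{-\theta}$.

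Concretely, I would fix a parameter $\mu\in(0,1)$ --- to be chosen later in terms of $\zeta$ and $\theta$ --- and define the increasing sequence
$$
t_0 := \rho, \qquad t_{i+1} := t_i + (1-\mu)\mu^i(R-\rho),\qquad i\ge 0,
$$
so that $t_i = \rho + (1-\mu^i)(R-\rho)$ lies in $[\rho,R)$ with $t_i\uparrow R$, and the gap equals $t_{i+1}-t_i = (1-\mu)\mu^i (R-\rho)$. Applying the assumed inequality with $t=t_i$ and $\tau=t_{i+1}$ gives
$$
f(t_i) \le c_1(1-\mu)^{-\theta}\mu^{-i\theta}(R-\rho)^{-\theta} + c_2 + \zeta\,f(t_{i+1}),
$$
and iterating this $k$ times produces
$$
f(\rho) \le c_1(1-\mu)^{-\theta}(R-\rho)^{-\theta}\sum_{i=0}^{k-1}\bigl(\zeta\mu^{-\theta}\bigr)^i + c_2\sum_{i=0}^{k-1}\zeta^i + \zeta^k f(t_k).
$$

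The decisive step is to choose $\mu$ so that $\mu<1$ and $\zeta\mu^{-\theta}<1$ simultaneously; this is always possible since $\zeta<1$ (e.g.\ $\mu^\theta=(1+\zeta)/2$ when $\theta>0$, and the case $\theta=0$ is trivial because $(\tau-t)^{-\theta}\equiv 1$ and the first geometric sum reduces to $\sum_i\zeta^i$). With such a $\mu$ both geometric series converge uniformly in $k$, and since $f$ is globally bounded on $[T_0,T_1]$ and $\zeta<1$, the residual $\zeta^k f(t_k)$ vanishes as $k\to\infty$. Absorbing all $\mu$-dependent prefactors into a single constant $c=c(\theta,\zeta)$ yields the asserted estimate
$$
f(\rho) \le c\bigl[c_1(R-\rho)^{-\theta}+c_2\bigr].
$$

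The only delicate point --- and the step I would double-check --- is the joint constraint $\mu<1$ and $\zeta\mu^{-\theta}<1$: both are satisfiable precisely because $\zeta<1$, but the resulting constant $c$ inevitably blows up as $\zeta\uparrow 1$ or $\theta\to\infty$, which matches the stated dependence $c=c(\theta,\zeta)$. The a priori boundedness of $f$ is used only to kill the tail $\zeta^k f(t_k)$ in the limit, but it is essential: without it the iteration would not close, so I would make sure to flag this hypothesis in the write-up.
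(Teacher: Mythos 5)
Your proof is correct, and it is the standard argument for this classical iteration lemma; the paper itself does not give a proof but simply cites Giaquinta--Giusti, whose argument (geometric subdivision of $[\rho,R]$ with ratio $\mu$ chosen so that $\zeta\mu^{-\theta}<1$) is exactly what you have reproduced. The choice $\mu^\theta=(1+\zeta)/2$, the convergence of the two geometric series, and the use of boundedness to kill the residual $\zeta^k f(t_k)$ are all handled correctly, including the trivial case $\theta=0$.
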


\vspace{2mm}

\section{Towards a Harnack inequality: expansion of positivity}\label{sec_positivity} 

In this section, we show that we can accurately estimate the infimum of the super-minima of~\eqref{funzionale} and of the weak supersolutions to problem~\eqref{problema}. Our strategy extends the analogous expansion of positivity in the local framework~$s=1$, as presented, e.~\!g., in~\cite[Section 7.5]{Giu03}.  
Clearly, in order to extend the results there to our framework, we have to take into account considerable and decisive modifications to handle the nonlocality of our problems.

From now on, for the sake of readability, we define
\begin{equation*} 
{\rm d}\nu:=K(x,y)\,{\rm d}x{\rm d}y\quad \text{and} \quad {\rm d}\bar{\nu}:= \bar{K}(x,y)\,{\rm d}x{\rm d}y,\,\,\, \text{with }\bar{K}\text{ as in }\eqref{bark}.
\end{equation*}
\begin{lemma}\label{thm_exp}
Let $u\in W^{s,p}(\mathds{R}^n)$ be a weak supersolution to problem~\eqref{pb} such that $u\geq 0$ in $B_R({x_0})\subset \Om$. Let $k \geq 0$. Suppose that there exists $\sigma \in (0,1]$ such that
\begin{equation}\label{hp0}
\dys
| B_r \cap \{ u \geq k \} | \, \geq \, \sigma |B_r|,
\end{equation}
for some $r$ satisfying $0<16r<R$. Then there exists a constant~$\bar c \equiv \bar c(n,s,p,\lambda,\Lambda)$ such that 
\begin{equation*}\label{eq_utilde}
\dys
\left| B_{6r} \cap \left\{ u \leq 2\delta k -  \frac{1}{2} \left( \frac{r}{R} \right)^{\frac{sp}{p-1}} \text{\rm Tail}(u_-; {x_0}, R) \right\}\right| 
\, \leq \, \frac{\bar c}{\sigma \log{\frac{1}{2\delta}}}\, |B_{6r}|
\end{equation*}
holds for all $\delta \in (0,1/4)$, where ${\rm Tail}(\cdot)$ is defined in \eqref{def_tail}.
\end{lemma}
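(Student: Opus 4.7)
The strategy is to combine the nonlocal logarithmic lemma of~\cite{DKP13} with a fractional Poincar\'e-type inequality for functions vanishing on a set of positive relative measure. For brevity, set $T := (r/R)^{sp/(p-1)}\,\text{\rm Tail}(u_-;{x_0},R)$. If $T > 4\delta k$, then $2\delta k - T/2 < 0$; since $16r<R$ gives $B_{6r}\subset B_R$ and $u\ge 0$ on $B_R$, the level set in the conclusion is empty and the claim is trivial. We may therefore assume $T \leq 4\delta k$.

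Choose the shift $d := T$ and put $V := \log(d+u)$. The role of $d$ is to absorb the tail term on the right-hand side of the logarithmic estimate of~\cite{DKP13}: with this choice, applied on a ball containing $B_{6r}$ and contained in $B_R$ (admissible thanks to $16r<R$), that lemma delivers
$$\int_{B_{6r}}\!\int_{B_{6r}} \left|\log\frac{d+u(x)}{d+u(y)}\right|^p K(x,y)\dxy \,\leq\, c\,r^{n-sp},$$
whence the ellipticity~\eqref{hp_k} yields $[V]_{W^{s,p}(B_{6r})}^p \leq c\,r^{n-sp}$. Setting $L := \log(1/(2\delta))$, I introduce the truncation
$$w(x) := \min\!\left\{L,\ \log^+\!\frac{k+d}{u(x)+d}\right\}\!,$$
which is a $1$-Lipschitz function of $V$, so $[w]_{W^{s,p}(B_{6r})}^p \leq c\,r^{n-sp}$ as well. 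Two geometric observations are decisive: by~\eqref{hp0}, $w = 0$ on $\{u \geq k\}\cap B_r$, a set of measure at least $\sigma|B_r|\ge c_n\sigma|B_{6r}|$; and since $d=T$ and $\delta<1/4$, a short computation shows that $u(x)\le 2\delta k - T/2$ forces $(k+d)/(u(x)+d)\ge 1/(2\delta)$, hence $w(x) = L$ on the very set whose measure we wish to estimate.

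A fractional Poincar\'e inequality applied to $w - (w)_{B_{6r}}$, with the mean $(w)_{B_{6r}}$ controlled by H\"older against the characteristic function of $\{w=0\}$, produces
$$\|w\|_{L^p(B_{6r})}^p \,\leq\, \frac{c}{\sigma}\,r^{sp}\,[w]_{W^{s,p}(B_{6r})}^p \,\leq\, \frac{c}{\sigma}\,|B_{6r}|.$$
H\"older then gives $\|w\|_{L^1(B_{6r})}\leq c\,\sigma^{-1/p}|B_{6r}|$, and Chebyshev on the level $\{w=L\}$ yields $L\cdot|\{w=L\}\cap B_{6r}|\leq \|w\|_{L^1(B_{6r})}$. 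Using $\sigma\le 1$ to dominate $\sigma^{-1/p}$ by $\sigma^{-1}$ and invoking the second geometric observation completes the proof.

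The principal obstacle I anticipate is the interplay between the shift $d$ and the tail term in the logarithmic lemma of~\cite{DKP13}: one must check that $d = T$ balances the extrinsic and intrinsic contributions so that the logarithmic energy is bounded by $r^{n-sp}$ with no leftover $d$-dependence. A secondary, more bookkeeping-type point is the explicit $\sigma$-dependence in the fractional Poincar\'e step; although not recalled in the classical form earlier in the paper, the mean-estimation trick sketched above reduces it to the standard zero-mean Poincar\'e inequality.
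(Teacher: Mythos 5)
Your approach mirrors the paper's proof: shift by a tail-sized constant $d$, establish the intrinsic logarithmic estimate for $\tilde u = u+d$, truncate to obtain $w$, run the fractional Poincar\'e plus Mal\'y--Ziemer argument, and finish with Chebyshev. (The paper derives the logarithmic estimate by testing directly with $\eta = \tilde u^{1-p}\varphi^p$ rather than invoking the logarithmic lemma of~\cite{DKP13} as a black box, but the content is the same, and your anticipation that the shift must balance the tail contribution is exactly the point.)

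There is, however, a concrete error in the choice of shift: with $d = T$, the geometric implication you assert fails for every $\delta \in (0,1/4)$. Indeed, if $u(x) \le 2\delta k - T/2$ then $\tilde u(x) = u(x)+T \le 2\delta k + T/2$, and the requested inequality $(k+d)/\tilde u(x) \ge 1/(2\delta)$ would require
\[
2\delta(k+T) \,\ge\, 2\delta k + \tfrac{T}{2}, \qquad \text{i.e.,} \qquad \delta \ge \tfrac14,
\]
precisely the excluded range. More generally, a shift $d>0$ makes $\{u \le 2\delta k - T/2\}$ a subset of $\{(k+d)/(u+d) \ge 1/(2\delta)\}$ if and only if $(1-2\delta)\,d \le T/2$; for this to hold uniformly over $\delta\in(0,1/4)$ one needs $d\le T/2$. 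Taking $d = T/2$, as the paper does, the check is immediate ($u + d \le 2\delta k \le 2\delta(k+d)$), while the tail-absorption step is unaffected since $d^{1-p}(r/R)^{sp}\big[\text{\rm Tail}(u_-;x_0,R)\big]^{p-1} = 2^{p-1}$ is still a dimensional constant. With that single correction the remainder of your argument goes through; the minor imprecision in the stated $\sigma$-exponent after the Poincar\'e step is harmless, since you dominate by $\sigma^{-1}$ before concluding and that is the dependence appearing in the lemma.
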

\begin{proof}
To begin, set
\[
d := \frac{1}{2} \left( \frac{r}{R} \right)^{\frac{sp}{p-1}} \text{\rm Tail}(u_-; {x_0}, R) \qquad \mbox{and} \qquad
\tilde u = u + d,
\]
so that $\tilde u$ is obviously still a supersolution. 
Take a smooth function~$\varphi$ with support in $B_{7r}$ such that $0\leq \varphi \leq 1$ in $B_{7r}$, $\varphi\equiv 1$ in $B_{6r}$ and $|D\varphi| \leq c/r$. By choosing $\eta:= \tilde{u}^{1-p} \varphi^p$ in~\eqref{weak_super}, we get
\begin{eqnarray}\label{eq_I1I2I3}
\dys
0 & \leq & \int_{B_{8r}}\int_{B_{8r}}|\tilde{u}(x)-\tilde{u}(y)|^{p-2}(\tilde{u}(x)-\tilde{u}(y))
(\tilde{u}^{1-p}(x)\varphi^p(x) - \tilde{u}^{1-p}(y)\varphi^p(y))\dn \nonumber \\[1ex] 
& &  + \, \int_{\mathds{R}^n \setminus B_{8r}}\int_{B_{8r}}|\tilde{u}(x)-\tilde{u}(y)|^{p-2}(\tilde{u}(x)-\tilde{u}(y))
\tilde{u}^{1-p}(x)\varphi^p(x)\dn \nonumber \\[1ex]
&& - \, \int_{B_{8r}}\int_{\mathds{R}^n \setminus B_{8r}}|\tilde{u}(x)-\tilde{u}(y)|^{p-2}(\tilde{u}(x)-\tilde{u}(y))
\tilde{u}^{1-p}(y)\varphi^p(y)\dn \nonumber \\[1.5ex]
& =: & I_1 + I_2 +I_3.
\end{eqnarray}

The first integral  
can be estimated like $I_1$ in the proof of \cite[Lemma 1.3]{DKP13} (more precisely, see (3.12) and (3.17) there), in order to get
\begin{equation*}\label{eq_I1}
\dys
I_1 \, \leq \, - \frac1c \int_{B_{6r}}\int_{B_{6r}} \left| \log\left(\frac{\tilde{u}(x)}{\tilde{u}(y)}\right)\right|^p \dn + c\,r^{n-sp}.
\end{equation*}
\vspace{1mm}

It remains to estimate the second integral in the right hand-side of~\eqref{eq_I1I2I3}, which in turn will imply an estimate for $I_3$, too.
Firstly, we split $I_2$ as follows
\begin{eqnarray*} 
\nonumber
I_{2} & = & \int_{\mathds{R}^n \setminus B_{8r} \cap \{\tilde{u}(y) < 0\}}\int_{B_{8r}}  
|\tilde{u}(x)-\tilde{u}(y)|^{p-2}(\tilde{u}(x)-\tilde{u}(y))
\tilde{u}^{1-p}(x)\varphi^p(x)\dn
\\ 
\nonumber 
& & + \int_{\mathds{R}^n \setminus B_{8r} \cap \{\tilde{u}(y) \geq 0\}}\int_{B_{8r}}  
|\tilde{u}(x)-\tilde{u}(y)|^{p-2}(\tilde{u}(x)-\tilde{u}(y))
\tilde{u}^{1-p}(x)\varphi^p(x)\dn
\\[1ex]
 \nonumber & =: & I_{2,1} +  I_{2,2} .
\end{eqnarray*}
By the definition of $\tilde{u}$ and so of $d$,  the assumption on the kernel and using the fact that $\varphi$ is supported in $B_{7r}$, we get
\begin{eqnarray*} 
\nonumber I_{2,1} & = & 
\int_{\mathds{R}^n \setminus B_{8r} }\int_{B_{8r}}   (\tilde{u}(x) + (\tilde{u}(y))_-)^{p-1} \tilde{u}^{1-p}(x)\varphi^p(x)\dn
\\[1ex]
 \nonumber & \leq & c r^n  \int_{\mathds{R}^n \setminus B_{8r} } \left(1 + \frac{(u(y))_-}{d} \right)^{p-1} |y-{x_0}|^{-n-sp}  \, dy
\\[1ex]
 \nonumber & \leq & c r^n r^{-sp} + c r^n d^{1-p}  R^{-sp} [\text{\rm Tail}(u_-; {x_0}, R)]^{p-1}
\\[1ex]
 \nonumber & \leq &  cr^n r^{-sp} .
\end{eqnarray*}
On the other hand, since $u(y)$ is nonnegative whenever $y \in B_{7r}$, 
we easily deduce  that
\[
I_{2,2} \leq c r^{n-sp}\,.
\]
Therefore, we actually obtain
\[
I_{2} + I_3 \leq c r^{n-sp}
\]
for a constant $c \equiv c(n,s,p,\lambda,\Lambda)$.
Merging the  estimates
above, we conclude with the following intermediate estimate
\begin{equation}\label{eq999}
\int_{B_{6r}}\int_{B_{6r}} \left| \log\left(\frac{\tilde{u}(x)}{\tilde{u}(y)}\right)\right|^p\dn  \leq c r^{n-sp}\,.
\end{equation}
Now, for any $\delta\in (0,\,1/4)$, define
$$
\dys
v:= \left[\min \bigg\{ \log{\frac{1}{2\delta}},\, \log \frac{k + d}{\tilde{u}} \bigg\}\right]_{+}\! .
$$
Since $v$ is a truncation of $\log(k+d) - \log \tilde u $, the energy decreases and, in particular, 
$$
\int_{B_{6r}}\int_{B_{6r}} \left|v(x) - v(y)\right|^p \dn
\,  \leq\, 
\int_{B_{6r}}\int_{B_{6r}} \left| \log\left(\frac{\tilde{u}(x)}{\tilde{u}(y)}\right)\right|^p\dn  
\,\leq \,c r^{n-sp}
$$
holds, in view of~\eqref{eq999}.
 Then by H\"older's inequality and fractional Poincar\'e inequality  (see, e.~g., Section~4 in~\cite{Min03}) we also deduce that
\begin{equation}\label{eq_v}
\int_{B_{6r}}|v(x)-(v)_{B_{6r}}|\,{\rm d}x \leq cr^{s+n/p'} \left[\int_{B_{6r}}\int_{B_{6r}} |v(x)-v(y)|^p \,{\rm d}\nu \right]^{1/p}\leq  c\, |B_{6r}|.
\end{equation}
Notice that by the definitions of $v$ and $\tilde{u}$ we have
$$
\{ v=0 \} =  \{\tilde{u} \geq k+d \}=\{u\geq k \} .
$$
Hence, by assumption~\eqref{hp0}, it follows that
$$
\dys |B_{6r} \cap \{ v= 0 \} |  \geq \frac{\sigma}{6^n} |B_{6r}|.
$$
Following~\cite{MZ97} (see also the proof of Lemma 5.1 in \cite{DKP13}), together with the estimate above, we get
\begin{eqnarray*}
\dys
\log{\frac{1}{2\delta}} & = & \frac{1}{|B_{6r}\cap \{ v=0 \} |} \int_{B_{6r}\cap \{ v=0 \}} \left( \log{\frac{1}{2\delta}}-v(x)\right) \, {\rm d}x \\[1ex]
& \leq & \frac{6^n}{\sigma}\left[ \log{\frac{1}{2\delta}} -(v)_{B_{6r}}\right].
\end{eqnarray*}
Thus, integrating the previous inequality over $B_{6r} \cap\big \{ v=\log (1/2\delta)\big\}$, we get
$$
\left|\left\{ v=\log{\frac{1}{2\delta}}\right\} \cap B_{6r} \right|\,\log \frac{1}{2\delta} 
\, \leq \, \frac{6^n}{\sigma} \int_{B_{6r}} | v(x)- (v)_{B_{6r}}| \, {\rm d}x 
\, \leq \, \frac{c}{\sigma} \,|B_{6r}|,
$$
where we also used~\eqref{eq_v}.  On the whole, we have proved for all $0<\delta<1/4$ that
\begin{equation*}\label{eq_utilde2}
\dys
|  B_{6r} \cap \{ \tilde{u}\leq 2\delta(k+d) \}| 
\, \leq \, \frac{c}{\sigma} \, \frac{1}{\log{\frac{1}{2\delta}}}\, | B_{6r}|;
\end{equation*}
thus inserting the definition of $\tilde u$ into the display above
finishes the proof.
\end{proof}
\vspace{1.5mm}

The main result of this section is condensed in the following
\begin{lemma}\label{thm_exp2}
Let $u\in W^{s,p}(\mathds{R}^n)$ be a weak supersolution to problem~\eqref{pb} such that $u\geq 0$ in $B_R({x_0})\subset \Om$. Let $k\geq 0$ and suppose that there exists $\sigma\in (0,1]$  such that
\begin{equation*}\label{hp0_bis}
\dys
| B_r \cap \{ u \geq k \} | \, \geq \, \sigma |B_r|,
\end{equation*}
for some $r$ satisfying $0<16 r<R$.  Then there exists a constant~$\delta\in (0,1/4)$ depending only on $n$, $p$, $s$, $\lambda$, $\Lambda$, $\sigma$, for which
\begin{equation}\label{eq_exp}
\dys
\inf_{B_{4r}} u \, \geq \, \delta k -   \left(\frac{r}{R}\right)^{\frac{sp}{p-1}} \text{\rm Tail}(u_-; {x_0}, R) 
\end{equation}
holds. Here ${\rm Tail}(\cdot)$ is defined in \eqref{def_tail}.
\end{lemma}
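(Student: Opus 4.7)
Set $d := (r/R)^{sp/(p-1)} \text{\rm Tail}(u_-; x_0, R)$, so that \eqref{eq_exp} reads $\inf_{B_{4r}} u \geq \delta k - d$. The plan is to use Lemma~\ref{thm_exp} as the one-level measure input and upgrade it to a pointwise lower bound by a nonlocal De Giorgi iteration on shrinking balls and decreasing levels, in the spirit of \cite[Section 7.5]{Giu03}, with the tail contributions carefully tracked. Since $u \geq 0$ in $B_R$, I may freely assume that $d < \delta_0 k$ in what follows; otherwise the claim is either trivial or vacuous once $\delta \leq \delta_0$.

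Pick an auxiliary parameter $\delta_0 \in (0, 1/4)$ to be determined at the end, and apply Lemma~\ref{thm_exp} with $\delta_0$ in place of $\delta$ to obtain
\[
\big| B_{6r} \cap \{ u \leq 2\delta_0 k - d/2 \}\big| \, \leq \, \frac{\bar c}{\sigma\, \log(1/(2\delta_0))}\,|B_{6r}| \,=:\, \nu_0\, |B_{6r}|,
\]
with $\nu_0 \to 0$ as $\delta_0 \to 0$. Introduce the standard iteration data: radii $\rho_j := 4r(1 + 2^{-j})$, cutoffs $\varphi_j \in C_0^\infty(B_{(\rho_j + \rho_{j+1})/2})$ equal to $1$ on $B_{\rho_{j+1}}$ with $|\nabla \varphi_j| \leq C 2^j/r$, decreasing levels $k_j$ interpolating from $k_0 := 2\delta_0 k - d/2$ down to $k_\infty := \delta k - d$ with geometric gaps $k_j - k_{j+1} \geq c\, 2^{-j} \delta_0 k$ (once $\delta := \delta_0/4$ has been chosen), and the excess quantities $w_j := (k_j - u)_+$, $A_j := \dashint_{B_{\rho_j}} w_j^p\,dx$.

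The core step is to derive, for each $j$, a nonlinear recursion $A_{j+1} \leq C b^j A_j^{1+\beta}$ from Theorem~\ref{lem_caccio} applied to $w_-$ at level $k_j$ (valid for the supersolution $u$ by Remark~\ref{rem_caccioppoli}), combined with the fractional Sobolev embedding on $B_{\rho_{j+1}}$ and a H\"older--Chebyshev step on the set $\{u < k_{j+1}\}$ (using $w_j \geq k_j - k_{j+1}$ there). The local interaction term on the right-hand side of Caccioppoli gives the standard bound $c\, 2^{jsp}\, r^{-sp}\int_{B_{\rho_j}} w_j^p\,dx$. The nonlocal tail factor $T_j := \sup_y \int_{\R^n \setminus B_{\rho_j}} \bar K(x,y)\, w_j^{p-1}(x)\,dx$ is controlled by splitting at $|x - x_0| = R$: on $B_R \setminus B_{\rho_j}$ the hypothesis $u \geq 0$ gives $w_j \leq k_j \leq 2\delta_0 k$ and the distance estimate $|x - y| \geq c\, r\,2^{-j}$ produces a contribution of order $(2^j/r)^{sp}(\delta_0 k)^{p-1}$; on $\R^n \setminus B_R$ one writes $w_j \leq k_j + u_-(x)$ and uses $|x - y| \geq |x - x_0|/2$, and the exponent $sp/(p-1)$ in the definition of $d$ is precisely what converts the identity $\int_{\R^n \setminus B_R} u_-^{p-1}(x) |x-x_0|^{-n-sp}\,dx = R^{-sp} [\text{\rm Tail}(u_-; x_0, R)]^{p-1}$ into $r^{-sp} d^{p-1}$. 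Since $d < \delta_0 k$, both pieces combine into $T_j \leq c (2^j/r)^{sp}(\delta_0 k)^{p-1}$, which is absorbable into the iteration.

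To close, Lemma~\ref{lem_iter} yields $A_j \to 0$ provided $A_0 \leq C^{-1/\beta} b^{-1/\beta^2}$. Since $A_0 \leq (2\delta_0 k)^p\, \nu_0$, this smallness is guaranteed by fixing $\delta_0$ (hence $\delta = \delta_0/4$) sufficiently small, depending only on $n, s, p, \lambda, \Lambda, \sigma$. Passing to the limit then gives $(k_\infty - u)_+ = 0$ almost everywhere on $B_{4r}$, which is exactly \eqref{eq_exp}. The main obstacle is the tail bookkeeping for $T_j$: one must verify that the specific exponent $sp/(p-1)$ in the definition of $d$ is dimensionally matched to the local scaling so that the $d^{p-1}$ contribution from $u_-$ outside $B_R$ enters only as an $O(d)$ correction to the infimum, rather than as an error that compounds along the iteration.
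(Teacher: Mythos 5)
Your proof follows essentially the same route as the paper: Lemma~\ref{thm_exp} supplies the initial measure smallness, and a De Giorgi iteration built from the nonlocal Caccioppoli estimate (Theorem~\ref{lem_caccio} via Remark~\ref{rem_caccioppoli}), the fractional Sobolev embedding, and Lemma~\ref{lem_iter} promotes this to the pointwise lower bound, with the tail factor handled by the same splitting at $|x-x_0|=R$ and the same exploitation of the scaling exponent $sp/(p-1)$. One bookkeeping slip to repair: with $\rho_j=4r(1+2^{-j})$ you have $\rho_0=8r$, while the measure estimate from Lemma~\ref{thm_exp} lives on $B_{6r}$, so the claimed bound $A_0\leq(2\delta_0 k)^p\nu_0$ is not justified as written --- replacing the radii by $\rho_j=4r+2^{1-j}r$ (so $\rho_0=6r$ and $\rho_j\downarrow 4r$), as the paper does, makes the initial smallness line up with Lemma~\ref{thm_exp} and everything else goes through.
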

\begin{proof}
Without loss of generality, we may assume that 
\begin{equation}\label{tail cond exp pos}
 \left(\frac{r}{R}\right)^{\frac{sp}{p-1}} \text{\rm Tail}(u_-; {x_0}, R) \leq \delta k,
\end{equation}
since otherwise~\eqref{eq_exp} trivializes by the nonnegativity of $u$ in $B_R$. 

Now, for any $r\leq \rho \leq 6r$, take a smooth function $\varphi$ with support in $B_\rho$ and consider the  test function
$
\eta: = w_- \varphi^p,
$
where we have denoted by $w_-:=(\ell-u)_{+}$, for any $\ell \in (\delta k , 2\delta k)$. By testing~\eqref{weak_super}, we get
\begin{eqnarray*}\label{eq_10star}
\dys
0 & \leq & \int_{B_\rho} \int_{B_\rho} |u(x)-u(y)|^{p-2}( u(x)-u(y))
(w_-(x) \varphi^p(x) - w_-(y)\varphi^p(y)) \dn \nonumber \\[1ex]
&& +\int_{\mathds{R}^n \setminus B_\rho} \int_{B_\rho} |u(x)-u(y)|^{p-2}( u(x)-u(y))
w_-(x) \varphi^p(x) \dn \nonumber \\[1ex]
&& -\int_{B_\rho} \int_{\mathds{R}^n \setminus B_\rho} |u(x)-u(y)|^{p-2}( u(x)-u(y))
w_-(y) \varphi^p(y) \dn \nonumber \\[1.5ex]
& =: & J_1 + J_2 + J_3.
\end{eqnarray*}

As before, it is convenient to split the second (and analogously the third) integral in the right hand-side of the preceding inequality as
\begin{eqnarray*}
J_2 & = & 
\int_{\mathds{R}^n \setminus B_\rho \cap \{u(y) <  0\}} \int_{B_\rho} | u(x)-u(y)|^{p-2}(u(x)-u(y)) w_-(x) \varphi^p(x) \dn \\[1ex]
&& + \int_{\mathds{R}^n \setminus B_\rho \cap \{u(y) \geq  0\} }\int_{B_\rho} |u(x)-u(y)|^{p-2}(u(x)-u(y)) w_-(x) \varphi^p(x) \dn \\[1.5ex]
& =: & J_{2,1} + J_{2,2}.
\end{eqnarray*}

Let us estimate the integral $J_{2,1}$. Notice that 
\begin{eqnarray*}
\dys && K(x,y) |u(x)-u(y)|^{p-2} (u(x)-u(y)) w_-(x)\varphi^p(x) \\[1ex]
&& \qquad\qquad\qquad \qquad \leq \,(\ell + (u(y))_-)^{p-1} \ell \left( \sup_{x\in \text{supp}\,\varphi} \bar{K}(x,y) \right) \chi_{B_\rho \cap \{ u < \ell\}}(x), 
\end{eqnarray*}
where $\bar{K}$ is defined in~\eqref{bark}. This plainly yields
\begin{equation}\label{eq_12star}
\dys
J_{2,1} \, \leq  \,  \ell \left( \sup_{x\in \text{supp}\,\varphi} \int_{\mathds{R}^n \setminus B_\rho} (\ell + (u(y))_-)^{p-1} \bar{K}(x,y) \,{\textrm d}y \right) |B_\rho \cap \{ u < \ell\}|. 
\nonumber
\end{equation}
For the contribution given by $J_{2,2}$ we instead have, using the nonnegativity of $u$ in $B_\rho$,
\begin{eqnarray*}
\dys && K(x,y) |u(x)-u(y)|^{p-2} (u(x)-u(y)) w_-(x)\varphi^p(x) \\[1ex]
&& \qquad\qquad\qquad\qquad \qquad\qquad\qquad\leq \, \ell^p \left(\sup_{x\in {\rm supp}\,\varphi} \bar{K}(x,y)\right) \chi_{B_\rho \cap \{ u < \ell\}}(x).
\end{eqnarray*}
As the similar reasoning it holds for $J_3$ as well, we deduce that
$$
J_2 + J_3 
\, \leq\, c\, \ell \left( \sup_{x\in \text{supp}\,\varphi} \int_{\mathds{R}^n \setminus B_\rho} (\ell + (u(y))_-)^{p-1} \bar{K}(x,y) \,{\textrm d}y \right) |B_\rho \cap \{ u < \ell\}|. 
$$
 The integral in $J_1$ can be instead estimated as follows (as one can check in the proof of the Caccioppoli-type estimate in \cite[Theorem~1.4]{DKP13}).
\begin{eqnarray*}
\dys
J_1 & \leq & 
-c \int_{B_\rho} \int_{B_{\rho}} |w_-(x) \varphi(x) - w_-(y) \varphi(y) |^p \dn \\[1ex]
&& + c \int_{B_\rho} \int_{B_{\rho}} \big(\max\big\{w_-(x), w_-(y)\big\}\big)^p\, |\varphi(x) -\varphi(y) |^p \, {\textrm d} \bar{\nu}.
\end{eqnarray*}
By combining all the estimates above we finally arrive at
\begin{eqnarray}\label{eq_16star}
\nonumber
&&  \int_{B_\rho} \int_{B_{\rho}} |w_-(x) \varphi(x) - w_-(y) \varphi(y) |^p \dn 
\\ \nonumber   
& & \qquad\qquad\qquad   \leq   \, c \int_{B_\rho} \int_{B_{\rho}} \big(\max\big\{w_-(x),\,w_-(y)\big\}\big)^p\, |\varphi(x) -\varphi(y) |^p \, {\textrm d} \bar{\nu}
\\ & & \qquad\qquad\qquad \quad + \, c\, \ell \left( \sup_{x\in \text{supp}\,\varphi} \int_{\mathds{R}^n \setminus B_\rho} (\ell + (u(y))_-)^{p-1} \bar{K}(x,y) \,{\textrm d}y \right) |B_\rho \cap \{ u < \ell\}|\,.
\end{eqnarray}

At this level, we need to set the quantities in~\eqref{eq_16star} in order to apply Lemma~\ref{lem_iter}. To this end, let 
$$
\ell \equiv \ell_j := \delta k  + 2^{-j-1}\delta k,
$$
and
$$
\dys
\rho \equiv \rho_j:= 4r+2^{1-j} r   \qquad \text{and} \qquad  \widetilde \rho_j= \frac{\rho_{j+1}+\rho_j}{2}
$$
for all $j = 0,1,\ldots$ Note that $\rho_j, \widetilde \rho_j \in (4r,6r)$ and
\[
\ell_{j} - \ell_{j+1} 
\,=\, 2^{-j-2} \delta k 
\, \geq \, 2^{-j-3} \ell_j 
\]
for all such $j$.  Moreover, by~\eqref{tail cond exp pos} we see that 
$$
\ell_0 = \frac{3}{2} \delta k 
\, \leq \, 2\delta k - \frac{1}{2} \left(\frac{r}{R}\right)^{\frac{sp}{p-1}} \text{\rm Tail}(u_-; {x_0}, R)
$$
and hence
\[
\{u<\ell_0\} \subset \left\{u< 2\delta k - \frac{1}{2} \left(\frac{r}{R}\right)^{\frac{sp}{p-1}} \text{\rm Tail}(u_-; {x_0}, R)\right\}\,.
\]
Lemma~\ref{thm_exp} then implies that 
\begin{equation} \label{eq:exp pos iter init}
\frac{|B_{6r} \cap \{u<\ell_0\}| }{|B_{6r}|} \leq \frac{\bar c}{\sigma} \frac1{\log \frac1{2\delta}}. 
\end{equation}
Furthermore, we have for any $j=0,1,2,...$ that
$$
w_- \equiv w_j  =  (\ell_j - u)_{+}
\, \geq \, (\ell_j - \ell_{j+1}) \chi_{\{u < \ell_{j+1}\}} 
\,\geq\, 2^{-j-3} \ell_j \chi_{\{u < \ell_{j+1}\}} .
$$
Let us denote by $B_j:=B_{\rho_j}({x_0})$ and let $\varphi_j \in C^\infty_0(B_{\widetilde \rho_j})$ be such that $0\leq \varphi_j \leq 1$, $\varphi_j\equiv 1$ in $B_{j+1}$, $|D\varphi_j| \leq 2^{j+3}/r$.
With these choices in our hands, we can write
\begin{eqnarray}\label{eq_17star}
\dys
(\ell_j-\ell_{j+1})^p \left( \frac{ |B_{j+1} \cap \{ u < \ell_{j+1}\}|}{|B_{j+1}|} \right)^{\frac{p}{p^\ast}}
&\! \!\leq &\!
\left[ \dashint_{B_{j+1}} w_{j}^{p^\ast} \varphi_j^{p^\ast} \,{\rm d}x \right]^{\frac{p}{p^\ast}}  \leq  c\left[ \dashint_{B_{j}} w_{j}^{p^\ast} \varphi_j^{p^\ast} \,{\rm d}x \right]^{\frac{p}{p^\ast}} \nonumber \\[1ex]
& \!\!\leq & \!c r^{sp} \dashint_{B_j}\int_{B_j} | w_j(x) \varphi_j(x) - w_j(y)\varphi_j(y)|^p \dn,
\end{eqnarray}
where in the last inequality we also used the fractional Sobolev embedding with $p^*=np/(n-sp)$, well defined if $sp<n$.

We then proceed to estimate~\eqref{eq_17star} with the aid of~\eqref{eq_16star}. First, by the properties of $K$ we find the estimate
\begin{eqnarray*}
&&\int_{B_j} \int_{B_{j}} \big(\max\big\{w_j(x),w_j(y)\big\}\big)^p\, |\varphi_j(x) -\varphi_j(y) |^p \, {\textrm d} \bar{\nu}\\
&&\quad \quad\quad\quad\quad\quad \quad\quad \quad  \leq c\,\ell_j^p \int_{B_j}\int_{B_j\cap \{u <\ell_j\}}\|D\varphi_j\|_{\infty}^p|x-y|^{p-n-sp}\,{\rm d}x{\rm d}y\\[1ex]
&&\quad \quad \quad\quad\quad\quad\quad\quad \quad  \leq c\,2^{jp} \ell_j^p \, r^{-sp}|B_j\cap\{ u <\ell_j\}|\,.
\end{eqnarray*}
Second, using for any $y\in \mathds{R}^n \setminus B_j$,
\[
\sup_{x\in \text{supp}\,\varphi_j} \bar{K}(x,y) \leq c 2^{j(n+sp)} |y-x_0|^{-n-sp},
\]
we get that 
\begin{eqnarray*} 
\nonumber &&
\sup_{x\in \text{supp}\,\varphi_j} \int_{\mathds{R}^n \setminus B_j} (\ell_j + (u(y))_-)^{p-1} \bar{K}(x,y) \,{\textrm d}y 
\\[1ex] \nonumber && \qquad\qquad \leq c 2^{j(n+sp)} \int_{\mathds{R}^n \setminus B_j} (\ell_j + (u(y))_-)^{p-1} |y-x_0|^{-n-sp}  \,{\textrm d}y 
\\[1ex] \nonumber && \qquad\qquad \leq c 2^{j(n+sp)} \ell_j^{p-1} r^{-sp} + c 2^{j(n+sp)}  \int_{\mathds{R}^n \setminus B_R} (u(y))_-^{p-1} |y-x_0|^{-n-sp}  \,{\textrm d}y 
\\[1ex] \nonumber && \qquad\qquad = c 2^{j(n+sp)} \ell_j^{p-1} r^{-sp} + c 2^{j(n+sp)} r^{-sp} \left(\frac{r}{R} \right)^{sp} [\text{\rm Tail}(u_-; {x_0}, R)]^{p-1} 
\\[1ex] \nonumber && \qquad\qquad \leq  c 2^{j(n+sp)} \ell_j^{p-1} r^{-sp}
,
\end{eqnarray*}
where we have also used the fact that $u$ is nonnegative in $B_R$, \eqref{tail cond exp pos} and $\delta k< \ell_j$.
Thus, in view of the three displays above,~\eqref{eq_17star} and~\eqref{eq_16star} yield
$$
(\ell_j-\ell_{j+1})^p \left( \frac{ |B_{j+1} \cap \{ u < \ell_{j+1}\}|}{|B_{j+1}|} \right)^{\frac{p}{p^\ast}}
\, \leq\, c\,2^{j(n +p +sp)} \ell_j^p  \frac{ |B_{j} \cap \{ u < \ell_{j}\}|}{|B_{j}|}.
$$
If we set
$$
\dys 
A_j := \frac{ |B_{j} \cap \{ u < \ell_{j}\}|}{|B_{j}|},
$$
then  the previous estimates can be read as follows
$$
A_{j+1}^{\frac{p}{p^\ast}} \, \leq \,c\,\frac{\ell_j^p\,2^{j(n +p +sp)}}{(\ell_j-\ell_{j+1})^p} A_j
\,\leq \, c \, 2^{j(n +2p +sp)} A_j
$$
that in turn implies
\begin{equation*} 
A_{j+1}
\, \leq \, c_1\, 2^{j\left(\frac{np^\ast}{p} +2p^\ast +sp^\ast\right)}\, A_j^{1+\frac{sp}{n-ps}},
\end{equation*}
where $c_1 \equiv c_1(n,s,p,\lambda,\Lambda)$. 
Now, we are ready to apply Lemma~\ref{lem_iter} with 
$$
c_0=c_1,\,\,\, b=2^{\frac{n p^\ast}{p} +2p^\ast +sp^\ast}>1\quad \text{and}\quad \beta=\frac{sp}{n-ps}>0,
$$
there.
One can check that, choosing $\delta$ small enough depending only on 
$n, s, p, \lambda, \Lambda, \nu$; 
i.~\!e.,
$$
\dys
0<\delta:= \frac{1}{4} \exp\left\{ -\frac{\bar c \,c_1^{\frac{n-sp}{ps}} 2^{\left(\frac np+s+2\right)\frac{n(n-ps)}{ps^2}}}{\sigma}\right\}<\frac 14,
$$
and applying~\eqref{eq:exp pos iter init} assure that 
\begin{equation*}\label{eq_a0}
\dys
A_0 \, \leq \, c_1^{-\frac{n-sp}{sp}}\, 2^{-\left(\frac np+s+2\right)\frac{n(n-ps)}{ps^2}},
\end{equation*}
then
$$
\dys
\lim_{j\to\infty} A_j = 0;
$$
that is $\dys \inf_{x\in B_{4r}} u(x) \geq \delta k$, from which the result follows easily.
\end{proof}

\vspace{2mm}

\section{Proof of the Nonlocal Harnack inequality}\label{sec_harnack} 

In this section, we prove the nonlocal Harnack inequality as given in Theorem~\ref{harnack}.  The idea is to combine in a suitable way the local boundedness given by Theorem~\ref{thm_local}, true for subsolutions, together with the expansion of positivity obtained in Section~\ref{sec_positivity} that allows us to prove the next estimate valid for the infimum of supersolutions to problem \eqref{problema}, by mean of the classical tools presented in Section~\ref{sec_classic} and taking into account the tail estimate for solutions given in forthcoming Lemma~\ref{lem_tail}. 

\begin{lemma}\label{est_inf}
Let $u\in W^{s,p}(\mathds{R}^n)$ be a weak supersolution to problem \eqref{problema} such that $u\geq 0$ in $B_R\equiv B_R({x_0})\subset\Omega$. Then there exist constants $\varepsilon\in(0,1)$ and $c\geq 1$, both depending only on $n,s,p, \lambda$ and $\Lambda$ such that
\begin{equation}\label{inf}
\left(\dashint_{B_r} u^\varepsilon\,{\rm d}x\right)^{\frac{1}{\varepsilon}}\leq c\inf_{B_r} u+c\left(\frac{r}{R}\right)^{\frac{sp}{p-1}}{\rm Tail} (u_-;x_0,R)
\end{equation}
whenever $B_r\equiv B_r(x_0)\subset B_R$, where ${\rm Tail}(\cdot)$ is defined in \eqref{def_tail}.
\end{lemma}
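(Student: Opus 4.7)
The plan is to reduce (\ref{inf}) to a weak-type distributional estimate and deduce the $L^\varepsilon$ bound via the layer-cake identity. Set $d:=(r/R)^{sp/(p-1)}\text{\rm Tail}(u_-; x_0, R)$ and $m:=\inf_{B_r}u+d$. The target is to prove
$$
\frac{|B_r\cap\{u>t\}|}{|B_r|} \;\leq\; C\Bigl(\frac{m}{t}\Bigr)^{\!\varepsilon_0} \qquad \text{for every } t\geq m,
$$
for some $\varepsilon_0>0$ depending only on $n,s,p,\lambda,\Lambda$. Given this, (\ref{inf}) follows for any $\varepsilon\in(0,\varepsilon_0)$ by splitting $\int_{B_r}u^\varepsilon\,dx=\varepsilon\int_0^\infty t^{\varepsilon-1}|B_r\cap\{u>t\}|\,dt$ at $t=m$ and using the trivial bound $|B_r\cap\{u>t\}|\leq|B_r|$ for $t\leq m$ together with the weak-type estimate for $t>m$.

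To establish the distributional inequality I would iterate the expansion of positivity (Lemma~\ref{thm_exp2}) through the Krylov--Safonov covering (Lemma~\ref{lem_KS}). Fix $\bar\delta\in(0,c_3)$ in Lemma~\ref{lem_KS} and let $\delta_0\in(0,1/4)$ denote the corresponding constant produced by Lemma~\ref{thm_exp2} with $\sigma=\bar\delta/3^n$. Define dyadic levels $t_j:=M\mu^{-j}$ with $\mu:=\delta_0/2$ and $M$ a suitable multiple of $m$, and write $E_j:=B_r\cap\{u>t_j\}$, $\alpha_j:=|E_j|/|B_r|$. Applying Lemma~\ref{lem_KS} to $E_{j+1}$, the plan is to show $[E_{j+1}]_{\bar\delta}\subset E_j$, so that its alternative~(i) produces $\alpha_{j+1}\leq(\bar\delta/c_3)\alpha_j$, while its alternative~(ii) is excluded by $\inf_{B_r}u\leq m\leq t_j$. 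To verify the inclusion, take $z\in[E_{j+1}]_{\bar\delta}$, so that there exist $x\in B_r(x_0)$ and $\rho>0$ with $z\in B_{3\rho}(x)$ and $|E_{j+1}\cap B_{3\rho}(x)|>(\bar\delta/3^n)|B_{3\rho}(x)|$; Lemma~\ref{thm_exp2} applied at the ball $B_{3\rho}(x)$ then gives
$$
u(z) \;\geq\; \inf_{B_{12\rho}(x)}u \;\geq\; \delta_0 t_{j+1} - (3\rho/R')^{sp/(p-1)}\text{\rm Tail}(u_-; x, R'),
$$
with $R'$ chosen so that $B_{R'}(x)\subset B_R(x_0)$ and $48\rho<R'$. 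Provided $M$ is large enough to dominate the tail term, the right-hand side exceeds $t_j$, so $z\in E_j$. Iterating $\alpha_{j+1}\leq(\bar\delta/c_3)\alpha_j$ yields $\alpha_j\leq C(\bar\delta/c_3)^j$, which is exactly $\alpha_j\leq C(m/t_j)^{\varepsilon_0}$ with $\varepsilon_0=\log(c_3/\bar\delta)/\log(1/\mu)$; a standard interpolation in $t$ upgrades this to all $t\geq m$.

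The main obstacle is the bookkeeping of nonlocal tails throughout the iteration, since Lemma~\ref{thm_exp2} is applied at generic centers $x\in B_r(x_0)$ and produces the tail $\text{\rm Tail}(u_-; x, R')$ rather than $\text{\rm Tail}(u_-; x_0, R)$. Because $u\geq 0$ in $B_R(x_0)$ the integrand defining the tail vanishes inside $B_R(x_0)$, and the elementary comparison $|y-x|\geq\tfrac12|y-x_0|$ for $y\in\mathds{R}^n\setminus B_R(x_0)$ and $x\in B_{R/2}(x_0)$ gives
$$
(3\rho/R')^{sp/(p-1)}\text{\rm Tail}(u_-; x, R') \;\leq\; C\,(\rho/R)^{sp/(p-1)}\text{\rm Tail}(u_-;x_0,R) \;\leq\; C\,d,
$$
which is absorbed into $M\asymp m$. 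The geometric constraint $48\rho<R'$ forces $\rho$ to be a small fraction of $R-|x-x_0|$; when $r$ is not strictly smaller than $R/2$ the scheme is first run inside $B_{r'}(x_0)\subset B_{R/2}(x_0)$ and then promoted to $B_r(x_0)$ via a standard chaining over finitely many sub-balls, possibly aided by Lemma~\ref{lem_gg}, absorbing the boundary factor. Closing all constants is the main technical grind, but the architecture mirrors the classical local weak Harnack inequality (cf.~Section~7.5 in~\cite{Giu03}), with Lemma~\ref{thm_exp2} now playing the role of the local expansion of positivity.
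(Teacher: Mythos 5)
Your proposal is correct and follows essentially the same architecture as the paper's own proof: the Krylov--Safonov covering (Lemma~\ref{lem_KS}) together with the expansion of positivity (Lemma~\ref{thm_exp2}) produces a weak-type distributional estimate, and Cavalieri's principle converts it into the $L^\varepsilon$ bound. The difference is only bookkeeping---the paper iterates the nested sets $A^i_t=\{u>t\delta^i-T/(1-\delta)\}$ for a fixed $t$, absorbing the tail correction directly into the levels, rather than your dyadic levels $t_j=M\mu^{-j}$---and you are, if anything, more explicit than the paper about the tail re-centering and the geometric constraint $48\rho<R'$, which the paper leaves implicit.
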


\begin{proof}
Let us define for any $t>0$
$$
A^i_t=\left\{x\in B_{r} : u(x)> t \,\delta^i -\frac{T}{1-\delta}\right\}, \quad i=0,1,2,...,
$$
where $\delta$ is given in Lemma~\ref{thm_exp2} and we denoted by~$T$ the following quantity
\begin{equation*}\label{def_t}
\dys 
T:= \left( \frac{r}R \right)^{\frac{sp}{p-1}}\ta(u_-; {x_0}, R).
\end{equation*}

We want to make use of the Krylov-Safonov covering Lemma~\ref{lem_KS} with $E=A_t^{i-1}$. Obviously we have $A_t^{i-1}\subset A_t^{i}$, for any $i=1,2,...$ Let $x\in B_{r} $ such that $B_{3\rho}(x)\cap B_{r} \subset [A_t^{i-1}]_{\bar{\delta}}$, it has, recalling the definition \eqref{E_delta},
$$
|A_t^{i-1}\cap B_{3\rho}(x)|> \bar{\delta}|B_\rho |=\frac{\bar{\delta}}{3^n}|B_{3\rho}|.
$$
We can now apply Lemma \ref{thm_exp2}, with $k=t \,\delta^{i-1} -\frac{T}{1-\delta}$ and $\sigma=\frac{\bar{\delta}}{3^n}$ there, to get
$$
u>\delta \left(t \,\delta^{i-1} -\frac{T}{1-\delta}\right)- T=t\delta^i-\frac{T}{1-\delta}\quad \text{in}\,\,\, B_{r},
$$  
and hence $[A_t^{i-1}]_{\bar{\delta}}\subset A_t^i$. By Lemma~\ref{lem_KS} we must either $A_t^i=B_{r}$ or $|A_t^i|\geq \frac{c_3}{\bar\delta} |A_t^{i-1}|$ for $c_3 \equiv c_3(n)$. In any case, we can deduce that if for some integer $m$ it holds
\begin{equation}\label{A0}
|A^0_t|>c_3\left(\frac{\bar\delta}{c_3}\right)^m |B_{r}|,
\end{equation} 
then
$$
|A^{m-1}_t|>c_3\bar\delta^{-1}|A_t^{m-2}|>\dots>c_3^{m-1}\bar \delta^{1-m}|A^0_t|>\bar\delta |B_{r}|
$$
and therefore $A_t^m=B_{r}$. This implies
$$
u>t\delta^m-\frac{T}{1-\delta}\quad \text{in}\,\,\, B_{r}.
$$
Now we can choose $m$ to be the smallest integer such that \eqref{A0} is satisfied, that is 
$$
m>\frac{1}{\log (\bar\delta/c_3)}\log\frac{|A^0_t|}{c_3|B_{r}|}.
$$ 
With this choice of $m$ we get 
\begin{equation*}\label{eq_est_inf}
\inf_{B_{r}} u > \delta\, t\left(\frac{|A^0_t|}{c_3|B_{r}|}\right)^{\!\frac{1}{\beta}}-\frac{ T}{1-\delta} \,, \qquad \beta := \frac{\log (\bar \delta/c_3)}{\log  \delta},
\end{equation*}
where now both $\delta$ and $\beta$ depend only on $n,s,p,\lambda,\Lambda$ and $c_3 \equiv c_3(n)$. Setting $\xi:=\inf_{B_{r}} u $ we get
\begin{equation}\label{acca}
\frac{\left|B_{r} \cap \left\{u>t-\frac{T}{1-\delta}\right\}\right|}{|B_{r}|} =\frac{|A^0_t|}{|B_{r}|} \leq c_3 \delta^{-\beta} \,t^{-\beta}\,
\left(\xi+\frac{T}{1-\delta}\right)^{\beta}.
\end{equation}
By Cavalieri's Principle, we have
$$
\dashint_{B_{r}} u^\varepsilon\, {\rm d} x
\, = \, 
 \varepsilon \int_0^\infty t^{\varepsilon-1} \frac{|B_{r} \cap \{u > t \}|}{|B_{r}|} \, {\rm d} t
$$
for any $\eps>0$.  Since
\[
\frac{|B_{r} \cap \{u > t \}|}{|B_{r}|} \leq 
\frac{\left|B_{r} \cap \left\{u>t-\frac{T}{1-\delta}\right\}\right|}{|B_{r}|} 
\]
and using the estimate in~\eqref{acca}, it holds
\begin{eqnarray}
\nonumber \eps \int_0^\infty t^{\varepsilon-1} \frac{|B_{r} \cap \{u > t \}|}{|B_{r}|} \, {\rm d} t &\leq & \eps \int_0^{a} t^{\varepsilon-1}\, {\rm d} t \\
\nonumber && + \eps \int_{a}^\infty t^{\varepsilon-1} c_3\, \delta^{-\beta} \,t^{-\beta}\left(\xi+\frac{T}{1-\delta}\right)^{\beta} \, {\rm d} t \\
\nonumber&\leq& a^\varepsilon +\eps
\, c_3\, \delta^{-\beta}\left(\xi+\frac{ T}{1-\delta}\right)^{\beta}\int_{a}^\infty t^{\varepsilon-1-\beta}\,{\rm d}t
\end{eqnarray}
for any $a>0$. In particular, taking $a := \xi+\frac{ T}{1-\delta}$ and $\varepsilon := \beta/2$, we finally get
\begin{equation*}\label{est_uq}
\dashint_{B_{r}} u^\varepsilon\, {\rm d} x \leq c \, \left(\xi +\frac{T}{1-\delta}\right)^\varepsilon.
\end{equation*}
This concludes the proof.
\end{proof}

The next lemma gives a precise control of the tail of the weak solutions.

\begin{lemma}\label{lem_tail}
Let $u\in W^{s,p}(\mathds{R}^n)$ be a weak solution to problem~\eqref{problema} such that $u\geq 0$ in $B_R({x_0})\subset \Omega$.
Then, for 
$0<r<R$,
\begin{equation}\label{eq_4star}
\dys
\ta(u_+; {x_0},r) \, \leq \, c \sup_{B_r} u + c \left(\frac{r}{R}\right)^{\frac{sp}{p-1}}\ta(u_-; {x_0},R),
\end{equation}
where ${\rm Tail}(\cdot)$ is defined in \eqref{def_tail} and the constants $c$ depend only on $n$, $p$, $s$, $\lambda$ and~$\Lambda$.

\end{lemma}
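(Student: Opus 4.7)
The plan is to test the weak formulation of $\mathcal{L}u = 0$ against the simple cutoff $\eta = \varphi^p$, producing an identity from which the ``outgoing tail'' of $u_+$ across $\partial B_r$ can be read off. Set $M := \sup_{B_r} u \geq 0$ and pick $\varphi \in C_0^\infty(B_r)$ with $\varphi \equiv 1$ on $B_{r/2}$, $0 \leq \varphi \leq 1$ and $|D\varphi| \leq c/r$. Symmetrizing through $K_{\mathrm{sym}}$, the identity $\mathcal{E}(u, \varphi^p) = 0$ reads $L + 2N = 0$, where
$$L \,=\, \iint_{B_r \times B_r} K(x,y)\, |u(x){-}u(y)|^{p-2}(u(x){-}u(y))\, (\varphi^p(x){-}\varphi^p(y))\, \dxy$$
and
$$N \,=\, \int_{B_r}\int_{\R^n \setminus B_r} K(x,y)\, |u(x){-}u(y)|^{p-2}(u(x){-}u(y))\, \varphi^p(x)\, \dy \dx.$$

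For the local part: since $u \in [0, M]$ on $B_r$, combining $|\varphi^p(x){-}\varphi^p(y)| \leq c \max(\varphi(x),\varphi(y))^{p-1} |\varphi(x){-}\varphi(y)|$ with Young's inequality (and trading a small fraction against the Caccioppoli-controlled energy $\iint K|u(x)\varphi(x){-}u(y)\varphi(y)|^p$ from Theorem~\ref{lem_caccio}) yields, for any prescribed small $\tau > 0$, a bound $|L| \leq c_\tau M^{p-1} r^{n-sp} + \tau\, r^{n-sp}\, \ta(u_+; x_0, r)^{p-1}$. For $N$, I would split the $y$-integration into three regimes, exploiting that $K(x,y) \asymp |y-x_0|^{-n-sp}$ for $x \in B_{r/2}$ (where $\varphi = 1$) and $y \notin B_r$. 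Firstly, $y \notin B_r$ with $0 \leq u(y) \leq 2M$: the integrand has magnitude $\leq c K M^{p-1}$, contributing $O(M^{p-1} r^{n-sp})$. Secondly, $y \notin B_r$ with $u(y) > 2M$: here $u(y) > 2u(x)$ forces $u(y){-}u(x) \geq u_+(y)/2$, so the integrand is $\leq -c_p K u_+(y)^{p-1} \varphi^p(x)$, delivering the large negative contribution that houses the tail of $u_+$. Thirdly, $y \notin B_R$ with $u(y) < 0$ (the only configuration producing $u(y) < 0$, since $u \geq 0$ on $B_R$): then $u(x){-}u(y) = u(x)+u_-(y) \geq 0$ and the integrand is $\leq c K(M^{p-1} + u_-(y)^{p-1})\varphi^p(x)$, contributing at most $c\, r^{n-sp}[M^{p-1} + (r/R)^{sp}\, \ta(u_-; x_0, R)^{p-1}]$.

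Combining $L + 2N = 0$, using $\int \varphi^p\, dx \sim r^n$, and rewriting the second-regime negative contribution as (a multiple of) $-r^n \int_{\R^n \setminus B_r} u_+(y)^{p-1}|y-x_0|^{-n-sp}\dy + c M^{p-1} r^{n-sp}$ (the correction coming from subtracting the integral over $\{u_+ \leq 2M\}$), one reaches an inequality of the form
$$c_1\, r^{n-sp}\, \ta(u_+; x_0, r)^{p-1} \leq c_2 M^{p-1} r^{n-sp} + \tau\, r^{n-sp}\, \ta(u_+; x_0, r)^{p-1} + c_3\, r^{n-sp}(r/R)^{sp}\, \ta(u_-; x_0, R)^{p-1}.$$
Choosing $\tau$ small enough to absorb the middle term into the left side, dividing through by $r^{n-sp}$, and taking the $(p-1)$-th root (using $(a+b)^{1/(p-1)} \leq c(a^{1/(p-1)} + b^{1/(p-1)})$) then gives~\eqref{eq_4star}.

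The main obstacle is the bound on the local part $L$: the naive estimate $|u(x){-}u(y)|^{p-1} \leq M^{p-1}$ factored out of $\iint K|\varphi^p{-}\varphi^p|$ fails because the latter integral is diagonally divergent when $sp \geq 1$. One has to recover the genuine cancellation by writing $|\varphi^p(x){-}\varphi^p(y)| \leq c \max(\varphi,\varphi)^{p-1}|\varphi(x){-}\varphi(y)|$ and using Young's inequality to trade a small fraction of $\iint K(|u{-}u|\max(\varphi,\varphi))^p$ (controlled via the Caccioppoli estimate of Theorem~\ref{lem_caccio}, which itself brings in a $\ta(u_+; x_0, r)^{p-1}$ term that must then be absorbed on the left) against the tame integral $\iint K|\varphi{-}\varphi|^p \leq c\, r^{n-sp}$. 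A second, minor obstacle is the pointwise inequality $(a-b)^{p-1} \geq c_p a^{p-1} - C_p b^{p-1}$ for $0 \leq b \leq a$ used in the second regime, which requires separate derivations in the regimes $p \geq 2$ (convexity of $t \mapsto t^{p-1}$) and $1 < p < 2$ (subadditivity of $t \mapsto t^{p-1}$).
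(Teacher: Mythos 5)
Your strategy is genuinely different from the paper's and, in outline, it works, but it is more circuitous and leaves a real technical step unspelled. The paper tests the equation against $\eta = (u-2k)\varphi^p$ with $k = \sup_{B_r} u$; the crucial payoff of this choice is that on $B_r$ the factor $u-2k$ lies in the fixed interval $[-2k,-k]$, so the local integral $I_1$ is bounded below by $-ck^p\iint_{B_r\times B_r} K|\varphi(x)-\varphi(y)|^p \geq -ck^p r^{n-sp}$ by a direct pointwise algebraic inequality, with \emph{no} appeal to Caccioppoli estimates. You instead test against $\eta = \varphi^p$, which keeps $N$ essentially as clean as the paper's $I_2, I_3$ (your threshold $2M$ plays the role of the paper's level $k$), but transfers all the difficulty to the local part $L$, which you must then control by importing Theorem~\ref{lem_caccio} (at level $k=0$) and absorbing the resulting $\ta(u_+;x_0,r)^{p-1}$ term. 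That machinery is avoidable, and the paper avoids it.

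The one genuine gap in your writeup is the Young's inequality step for $L$: as stated, Young applied to $|u(x)-u(y)|^{p-1}\max(\varphi)^{p-1}\cdot|\varphi(x)-\varphi(y)|$ produces terms of homogeneity $|u|^p$ and $|u|^0$, not the asserted homogeneity $|u|^{p-1}$ of the bound $|L|\leq c_\tau M^{p-1}r^{n-sp}+\tau r^{n-sp}\ta(u_+;x_0,r)^{p-1}$. To get the form you claim you must use a \emph{weighted} Young inequality, $ab\leq \tfrac{p-1}{p}\theta^{\frac{p}{p-1}}a^{\frac{p}{p-1}}+\tfrac1p\theta^{-p}b^p$, with the scale chosen so that $\theta^{\frac{p}{p-1}} \sim \tau/M$, giving
$$
|L|\,\leq\, \frac{c\tau}{M}\iint_{B_r\times B_r} K\,|u(x)-u(y)|^p\max(\varphi(x),\varphi(y))^p\dxy \;+\; \frac{c\,M^{p-1}}{\tau^{p-1}}\iint_{B_r\times B_r}K|\varphi(x)-\varphi(y)|^p\dxy,
$$
after which the Caccioppoli estimate (which gives $\iint K|u\varphi-u\varphi|^p\leq cM^pr^{n-sp}+cMr^{n-sp}\ta(u_+;x_0,r)^{p-1}$) does deliver exactly your asserted bound once you compare the energy on the left of the Caccioppoli estimate with the $\max(\varphi)$-weighted energy via the identity $u(x)\varphi(x)-u(y)\varphi(y)=(u(x)-u(y))\varphi(x)+u(y)(\varphi(x)-\varphi(y))$. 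Since you state the correct final form of the bound, this reads as a glossed-over but essential step rather than a wrong turn. You should also note that $\varphi$ must be supported strictly inside $B_r$ (e.g. in $B_{3r/4}$) for $\sup_{y\in\text{supp}\varphi}\bar K(x,y)\leq c|x-x_0|^{-n-sp}$ over $x\notin B_r$ to hold, and that the absorption of the $\tau$-term is licensed by the a~priori finiteness of $\ta(u_+;x_0,r)$ for $u\in L^p(\R^n)$.

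Finally, the second ``minor obstacle'' you raise does not arise in your own regime decomposition: with the threshold $u(y)>2M\geq 2u(x)$ you have $u(y)-u(x)\geq u(y)/2$ directly, and for the correction term it suffices to use the trivial $u_+(y)^{p-1}\chi_{\{u>2M\}}\geq u_+(y)^{p-1}-(2M)^{p-1}$. No separate $p\geq 2$ versus $1<p<2$ case analysis is needed for that step. (It \emph{is} needed in the paper's $I_{2,1}$, where one shows $(u(y)-k)_+^{p-1}\geq c u_+(y)^{p-1}-ck^{p-1}$, but your cruder split with the $2M$ threshold bypasses it.)
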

\begin{proof}
Set $k:=\sup_{B_r} u$ and take a smooth function $\varphi \in C^{\infty}_0(B_{r})$ such that $0\leq \varphi  \leq 1$, $\varphi \equiv 1$ in $B_{r/2}$ and $|D\varphi | \leq 8/r$; consider the following test fuction
$$
\eta := (u-2k)\varphi ^p.
$$
We have
\begin{eqnarray}\label{eq_i123}
0 & = & \int_{B_r}\int_{B_r}|u(x)-u(y)|^{p-2}(u(x)-u(y))(\eta(x)-\eta(y))\dn \nonumber \\[1ex]
&& + \int_{\mathds{R}^n \setminus B_r} \int_{B_r}|u(x)-u(y)|^{p-2}(u(x)-u(y))(u(x)-2k)\varphi ^p(x)\dn \nonumber \\[1ex]
&& - \int_{B_r} \int_{\mathds{R}^n \setminus B_r}|u(x)-u(y)|^{p-2}(u(x)-u(y))(u(y)-2k)\varphi ^p(y)\dn \nonumber \\[1ex]
& =: & I_1 +I_2 + I_3.
\end{eqnarray}

A first estimate on $I_2$ will suggest the following split
\begin{eqnarray}\label{eq_1star}
\dys
I_{2} & \geq & \int_{\mathds{R}^n \setminus B_r} \int_{B_r} k (u(y)-k)_+^{p-1}\varphi ^p(x) \dn \nonumber \\[1ex]
& & - \int_{\mathds{R}^n \setminus B_r}\int_{B_r} 2k \chi_{\{u(y)< k\}} (u(x)-u(y))^{p-1}_+ \varphi ^p(x)\dn\nonumber  \\[1ex]
& =: & I_{2,1}- I_{2,2}.
\end{eqnarray}

Now,
\begin{eqnarray}\label{eq_2star}
\dys 
I_{2,1} & \geq &ck \int_{\mathds{R}^n \setminus B_r} \int_{B_r} u_+(y)^{p-1}\varphi ^p(x) \dn \nonumber
- c k^p \int_{\mathds{R}^n \setminus B_r}\int_{B_r} \varphi ^p (x) \dn \\[1ex]
& \geq & ck|B_r| r^{-sp}[\ta(u_+; {x_0}, r)]^{p-1} - c k^p r^{-sp}|B_r|,
\end{eqnarray}
where we have also used the fact that $ \varphi \equiv 1$ in $B_{{r}/{2}}$ and $2|y-x_0|\geq |x-y|$.

Also,
\begin{eqnarray}\label{eq_2star2}
\dys
I_{2,2} & \leq & 2k\int_{B_R\setminus B_r} \int_{B_r} k^{p-1}\varphi ^p\dn + 2k\int_{\mathds{R}^n \setminus B_R} \int_{B_r} (k + u(y)_-)^{p-1}\varphi ^p(x) \dn \nonumber \\[1ex]
& \leq & c k^p r^{-sp}|B_r| + c k |B_r| R^{-sp} [\ta(u_-; {x_0}, R)]^{p-1}.
\end{eqnarray}

Observe that $I_3$ can be estimated in the same way. Thus, combining~\eqref{eq_1star} with~\eqref{eq_2star} and~\eqref{eq_2star2}, we get
\begin{eqnarray}\label{eq_3star}
\dys
 I_2 + I_3 & \geq & -c k^p r^{-sp} |B_r| - ck |B_r |R^{-sp}[ \ta(u_-; {x_0},R)]^{p-1} \\[1ex]
 && + \, c k |B_r| r^{-sp}[\ta(u_+; x_0,r)]^{p-1}. \nonumber
 \end{eqnarray}
 
 \vspace{1mm}
 
Now, it remains to estimate the contribution given by $I_1$. For this, assume $\varphi (x) \geq \varphi (y)$; the opposite being treated in the same way as below. For brevity, denote by $w:= (u-2k)$. 
For any $(x,y) \in B_r \times B_r$, we have
\begin{eqnarray}\label{eq_3star2}
&& |w(x)-w(y)|^{p-2}(w(x)-w(y))(w(x)\varphi ^p(x) - w(y)\varphi ^p(y) )\nonumber \\
&& \qquad \qquad \geq \ |w(x)-w(y)|^p\varphi ^p(x) 
- c|w(x)-w(y)|^{p-1}|w(y)|\varphi ^{p-1}(x)|\varphi (x)-\varphi (y)|  \nonumber \\[1ex]
&& \qquad \qquad \geq \frac{1}{2}|w(x)-w(y)|^p\varphi ^p(x) - c|w(y)|^p|\varphi (x)-\varphi (y)|^p \\[1ex]
&& \qquad \qquad \geq -c k^p |\varphi (x)-\varphi (y)|^p, \nonumber
\end{eqnarray}
where in~\eqref{eq_3star2} we used usual Young's Inequality and also the fact that $u\leq k$ in $B_r$.
From the estimates above, together with the fact the $\varphi$ is a smooth function, we can deduce
\begin{eqnarray}\label{eq_3star3}
\dys
I_1 & \geq & -c k^p \int_{B_r}\int_{B_r} |\varphi (x)-\varphi (y)|^p \dn \ \geq \ -c k^p r^{-p} \int_{B_r}\int_{B_r} |x-y|^{p-sp-n}\dxy \nonumber \\[1ex]
& \geq & -ck^p r^{-sp}|B_r|.
\end{eqnarray}

Collecting \eqref{eq_i123}, \eqref{eq_3star} and \eqref{eq_3star3}, we finally obtain
$$
\dys
\ta(u_+; {x_0}, r) \, \leq \, ck + c \left(\frac{r}{R}\right)^{\frac{sp}{p-1}}\ta(u_-; {x_0}, R),
$$
which is the desired result, recalling the definition of $k$.
\end{proof}

\vspace{2mm}

We are finally
ready to complete the proof of the nonlocal Harnack inequality as stated in Theorem~\ref{harnack}. 

\begin{proof}
[Proof of the nonlocal Harnack Inequality]

Set
\begin{equation*}\label{def_gamma}
\gamma:=\frac{(p-1)n}{sp^2}.
\end{equation*}
The estimate in~\eqref{sup_estimate} 
 yields
$$
\sup_{B_{\rho/2}} u \, \leq  \, c\,\delta \ta(u_+; {x_0}, \rho/2)+c\, \delta^{-\gamma} \left( \dashint_{B_\rho} u_+^p {\rm d}x\right)^{\frac{1}{p}},
$$
which, combined with~\eqref{eq_4star} becomes
$$
\sup_{B_{\rho/2}} u \, \leq \, c \delta^{-\gamma}\left( \dashint_{B_\rho} u_+^p {\rm d}x\right)^{\frac{1}{p}}
+ c \delta \sup_{B_\rho} u + c \delta \left(\frac{\rho}{R}\right)^{\frac{sp}{p-1}}\ta(u_-; {x_0}, R).
$$

Now, we want to apply the iteration Lemma~\ref{lem_gg}. For this, set $\rho=(\sigma-\sigma')r$ with $1/2 \leq \sigma' < \sigma \leq 1$. We have by a covering argument that
\begin{eqnarray*}\label{eq_5star}
\dys
\sup_{B_{\sigma'r}} u & \leq & c\frac{\delta^{-\gamma}}{(\sigma-\sigma')^{\frac{n}{p}}}\left( \dashint_{B_{\sigma r}} u^p \, {\rm d} x\right)^{\frac{1}{p}} + c \delta \sup_{B_{\sigma r}}u  +\, c \delta \left(\frac{r}{R}\right)^{\frac{sp}{p-1}}\ta(u_-; {x_0}, R) \nonumber \\[1ex]
& \leq &  c\frac{\delta^{-\gamma}}{(\sigma-\sigma')^{\frac{n}{p}}} \left(\sup_{B_{\sigma r} }u\right)^{\!\frac{p-s}{p}}\left( \dashint_{B_{\sigma r}} u^s \, {\rm d} x\right)^{\frac{1}{p}} + c \delta \sup_{B_{\sigma r}}u \nonumber \\[1ex]
&& +\, c \delta \left(\frac{r}{R}\right)^{\frac{sp}{p-1}}\ta(u_-; {x_0}, R).  \end{eqnarray*}
By choosing $\delta=1/(4c)$, a standard application of Young's Inequality \big(with exponent $p/s$ and $(p-s)/p$\big) yields
\begin{eqnarray*}\label{eq_5star2}
\dys
\sup_{B_{\sigma' r}} u & \leq & \frac{1}{2} \sup_{B_{\sigma r}} u + \frac{c}{(\sigma-\sigma')^{\frac{n}{q}}}
\left( \dashint_{B_r} u^q \, {\rm d} x \right)^{\frac{1}q} \\[1ex]
&& + \, c \left( \frac{r}R \right)^{\frac{sp}{p-1}}\ta(u_-; {x_0}, R), \nonumber \quad \forall \,\,q \in (0, p^\ast),
\end{eqnarray*}	
so that Lemma~\ref{lem_gg}, choosing in particular $f(t):=\sup_{B_{\sigma't}}u$, $\tau=\sigma r$, $t=\sigma' r$, $\theta=n/q$
there, gives
\begin{equation*}\label{eq_6star}
\dys
\sup_{B_r} u \, \leq \, c \left (\dashint_{B_{r}} u^q\,{\rm d}x\right)^{\frac{1}q} + c \left( \frac{r}R \right)^{\frac{sp}{p-1}}\ta(u_-; {x_0}, R), \quad \forall \,\, q \in (0, p^\ast),
\end{equation*}
where the constant $c$ depends also on $q$. To conclude the proof we combine the above estimate with that established in Lemma~\ref{est_inf}, setting $q=\varepsilon$. 
\end{proof}

\vspace{2mm}

\section{Proof of the Nonlocal weak Harnack inequality}\label{sec_weak} 

This section is devoted to the proof of a weak Harnack type inequality for both the superminima of the functional in~\eqref{funzionale} and the weak supersolutions to the problem~\eqref{problema}. Before starting with the proof of Theorem~\ref{thm_weak}, we want to prove a Caccioppoli type estimate given by the following
\begin{lemma}\label{cacio2}
Let $p\in(1,\infty)$, $q\in (1,p)$, $d>0$ and let $u\in W^{s,p}(\mathds{R}^n)$ be a weak supersolutions to problem \eqref{problema} such that $u\geq 0$ in $B_R(x_0)\subset\Omega$. Then, for any $B_r\equiv B_r(x_0)\subset B_{3R/4}(x_0)$ and any nonnegative $\varphi \in C^\infty_0(B_r)$, the following estimate holds true
\begin{eqnarray}\label{eq_cacio2}
&&\hspace{-0.5cm} \int_{B_r}\int_{B_r} K(x,y)|w(x)\varphi(x)-w(y)\varphi(y)|^p\dx\dy \nonumber\\
&&\qquad \quad \leq c\int_{B_r}\int_{B_r}\bar K(x,y)(\max\{w(x),w(y)\})^p |\varphi(x)-\varphi(y)|^p \dx\dy \\
&& \qquad \quad \quad +c \,\bigg( \sup_{z\in \text{\rm supp} \varphi} \int_{\mathds{R}^n \setminus B_r}\bar{K}(z,y)\dy  \nonumber
\\ &&\qquad \qquad \quad\quad +  d^{1-p}R^{-sp}
\left(\int_{B_r} w^p(x)\varphi^p(x)\dx \right) 
\big[\text{\rm Tail}(u_-; x_0, R)\big]^{p-1},
\nonumber
\end{eqnarray} 
where $w:= (u+d)^{\frac{p-q}{p}}$, $\bar{K}$ is defined in \eqref{bark} and the constants $c$ depend only on $p$ and $q$. 
\end{lemma}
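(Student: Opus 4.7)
The plan is to test the weak-supersolution inequality~\eqref{weak_super} against the nonnegative function $\eta:=(u+d)^{1-q}\varphi^p$, which is supported in $B_r$ and pointwise well-defined since $u+d\geq d>0$ on $B_R\supset B_r$; admissibility in $W^{s,p}_0(\Omega)$ is secured through the standard truncation $\min\{(u+d)^{1-q},M\}\varphi^p$ followed by $M\to\infty$. Inserting $\eta$ and decomposing the resulting double integral as $0\leq J_1+J_2+J_3$, with $J_1$ over $B_r\times B_r$ and $J_2,J_3$ the two cross terms in which exactly one variable lies in $\mathds{R}^n\setminus B_r$ (the contribution from $(\mathds{R}^n\setminus B_r)\times(\mathds{R}^n\setminus B_r)$ vanishes since $\eta\equiv 0$ there), reduces the proof to bounding $J_1$ from below and $J_2,J_3$ from above.

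For $J_1$ the key ingredient is the pointwise algebraic inequality: for all $a,b>0$ and $\alpha,\beta\geq 0$,
\[
|a-b|^{p-2}(a-b)\bigl(a^{1-q}\alpha^p-b^{1-q}\beta^p\bigr)
\leq -c_1\bigl|a^{\frac{p-q}{p}}\alpha-b^{\frac{p-q}{p}}\beta\bigr|^p
+c_2\bigl(\max\{a^{\frac{p-q}{p}},b^{\frac{p-q}{p}}\}\bigr)^p|\alpha-\beta|^p,
\]
with $c_1,c_2$ depending only on $p,q$. This is the $p\neq 2$ algebraic backbone of Moser-type estimates for supersolutions; I would establish it by reducing to $a\geq b$ and treating the regimes $a\leq 2b$ and $a>2b$ separately, using the mean-value bound $|a^\sigma-b^\sigma|\leq c|\sigma|\max(a,b)^{\sigma-1}|a-b|$ and Young's inequality for cross terms, in the spirit of the algebraic lemmas underlying Theorem~\ref{lem_caccio}. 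Applied pointwise with $a=u(x)+d$, $b=u(y)+d$, $\alpha=\varphi(x)$, $\beta=\varphi(y)$, and combined with~\eqref{hp_k}, this turns $J_1$ into the bound
\[
J_1\leq -\tilde c_1\!\int_{B_r}\!\!\int_{B_r}\! K|w(x)\varphi(x)-w(y)\varphi(y)|^p\dxy+\tilde c_2\!\int_{B_r}\!\!\int_{B_r}\! \bar K \bigl(\max\{w(x),w(y)\}\bigr)^p|\varphi(x)-\varphi(y)|^p\dxy.
\]

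For the tail pieces $J_2,J_3$ only one $\eta$-factor survives (the one evaluated in $B_r$), so I would split the integrand by the sign of $u(x)-u(y)$: when $u(y)\geq u(x)$ the integrand has the favorable sign (nonpositive) and is discarded, while in the opposite regime the elementary bound $u(x)-u(y)\leq (u(x)+d)+u(y)_-$ gives
\[
(u(x)-u(y))^{p-1}(u(x)+d)^{1-q}\leq c\bigl((u(x)+d)^{p-q}+u(y)_-^{p-1}(u(x)+d)^{1-q}\bigr).
\]
The $(u(x)+d)^{p-q}=w^p(x)$ piece, integrated in $y$ against $\bar K$ over $\mathds{R}^n\setminus B_r$, yields the factor $\sup_{z\in\text{\rm supp}\,\varphi}\!\int_{\mathds{R}^n\setminus B_r}\!\bar K(z,y)\,\dy$ multiplying $\int_{B_r}w^p\varphi^p\,\dx$. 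For the $u(y)_-^{p-1}$ piece, only $y\in\mathds{R}^n\setminus B_R$ matters since $u_-\equiv 0$ on $B_R$; the hypothesis $B_r\subset B_{3R/4}$ yields $|x-y|\geq \frac14|y-x_0|$ for $x\in B_r$ and $y\in\mathds{R}^n\setminus B_R$, so~\eqref{hp_k} and the definition~\eqref{def_tail} give
\[
\sup_{x\in\text{\rm supp}\,\varphi}\int_{\mathds{R}^n\setminus B_R}\bar K(x,y)\,u_-(y)^{p-1}\,\dy\leq c R^{-sp}[\text{\rm Tail}(u_-;x_0,R)]^{p-1},
\]
while $(u(x)+d)^{1-q}=(u(x)+d)^{1-p}w^p(x)\leq d^{1-p}w^p(x)$ (valid since $1-p<0$ and $u\geq 0$) converts the remaining $x$-integral into $d^{1-p}\int_{B_r}w^p\varphi^p\,\dx$. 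Substituting these bounds on $J_1,J_2,J_3$ into $0\leq J_1+J_2+J_3$ and rearranging produces~\eqref{eq_cacio2}.

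The main obstacle is the local algebraic inequality above, which must balance the ``good'' negative gain $|w\varphi(x)-w\varphi(y)|^p$ against the ``cost'' $(\max w)^p|\varphi(x)-\varphi(y)|^p$; once it is in place, the tail bookkeeping is a careful but routine adaptation of the scheme in Theorem~\ref{lem_caccio}.
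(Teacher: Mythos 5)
Your plan matches the paper's proof very closely: same test function $\eta=(u+d)^{1-q}\varphi^p$, same three-term splitting $I_1+I_2+I_3$ of the tested inequality, the same pointwise bounds $|\tilde u(x)-\tilde u(y)|^{p-2}(\tilde u(x)-\tilde u(y))\le c\,\tilde u(x)^{p-1}+c\,u(y)_-^{p-1}$ and $\tilde u^{1-q}(x)\le d^{1-p}\tilde u^{p-q}(x)$ for the nonlocal pieces, and the observation that $u_-\equiv 0$ on $B_R$ together with $|x-y|\gtrsim |y-x_0|$ for $x\in B_r$, $y\notin B_R$ to extract $R^{-sp}[\text{Tail}(u_-;x_0,R)]^{p-1}$. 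The one place where your write-up is thinner than the paper's is precisely where you flag it: the algebraic inequality driving the $I_1$ bound. The paper does not prove this as a self-contained two-parameter inequality in the form you state; it instead uses the $\varepsilon$-expansion $\varphi^p(x)\le \varphi^p(y)+c_p\varepsilon\,\varphi^p(y)+(1+c_p\varepsilon)\varepsilon^{1-p}|\varphi(x)-\varphi(y)|^p$ from Lemma~3.1 of~\cite{DKP13} with the tailored choice $\varepsilon=\delta(\tilde u(x)-\tilde u(y))/\tilde u(x)$, then analyzes the function $g(t)=\frac{1-t^{1-q}}{1-t}$ in the two regimes $2\tilde u(y)\le\tilde u(x)$ and $2\tilde u(y)>\tilde u(x)$ (with $\delta=\frac{q-1}{2^{q+1}c_p}$) to obtain a negative term of the form $|w(x)-w(y)|^p\varphi^p(y)$, converting to $|w\varphi(x)-w\varphi(y)|^p$ only at the very end by the elementary triangle-type bound. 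Your suggested proof via mean value and Young is plausible in spirit and uses the same case split, but in the regime $\tilde u(x)>2\tilde u(y)$ it is not immediate that Young's inequality alone balances the $(b^\sigma\beta)^p$ cross term against the budgeted $(\max w)^p|\varphi(x)-\varphi(y)|^p$; the paper's $\varphi^p$-expansion with a $u$-dependent $\varepsilon$ is exactly what avoids this issue. So you have correctly identified the structure and the crux, but you would need to fill in the genuinely delicate part, which is not a routine consequence of the tools you cite.
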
  
\begin{proof}
For any $d>0$, let $\tilde{u}:=u+d$ and let $\eta:=\tilde{u}^{1-q}\varphi^p$, where $\varphi \in C^{\infty}_0(B_{r})$, $r<R$, and $q \in [1+\sigma, p-\sigma]$; $\sigma>0$ small. 
Test the equation in~\eqref{weak_super} with $\eta$ as above, by recalling that $\tilde{u}$ is a weak supersolution, we have
\begin{eqnarray}\label{I123}
\nonumber 0 & \leq & 
\int_{B_{r}}\int_{B_{r}} |\tilde{u}(x)-\tilde{u}(y)|^{p-2}\big(\tilde{u}(x)-\tilde{u}(y)\big)\big(\eta(x)-\eta(y)\big) \dn \\
\nonumber & &  + \int_{\mathds{R}^n \setminus B_{ r}}\int_{B_{ r}} |\tilde{u}(x)-\tilde{u}(y)|^{p-2}\big(\tilde{u}(x)-\tilde{u}(y)\big)\eta(x) \dn \\
\nonumber &&  - \int_{B_r}\int_{\mathds{R}^n \setminus B_r} |\tilde{u}(x)-\tilde{u}(y)|^{p-2}\big(\tilde{u}(x)-\tilde{u}(y)\big)\eta(y) \dn \\
& =: & I_1+I_2+I_3. 
\end{eqnarray}
First of all, notice that for any $x\in B_R\supset B_r$ and for any $y\in \mathds{R}^n$,
$$
|\tilde{u}(x)-\tilde{u}(y)|^{p-2}\big(\tilde{u}(x)-\tilde{u}(y)\big) \leq c(\tilde{u}(x))^{p-1}+c(u(y))_-^{p-1} \ \
\text{and} \ \
\tilde{u}^{1-q}(x) \leq d^{1-p} \tilde{u}^{p-q}(x).
$$
Also, we have that $(u(y))_{-}$ vanishes for any $y\in B_R$, thanks to the assumptions on $u$.
Thus, the following estimate holds true.
\begin{eqnarray}\label{I23}
\nonumber I_2+I_3 & \leq & 
c\int_{\mathds{R}^n \setminus B_r} \int_{B_r}\tilde{u}^{p-1}(x) \eta(x) \dnb + c\int_{\mathds{R}^n \setminus B_r} \int_{B_r} \big(u(y)\big)_{-}^{p-1} \eta(x) \dnb \\
& \leq & c\, \bigg( \sup_{z\in \text{\rm supp} \phi} \int_{\mathds{R}^n \setminus B_r}\bar{K}(z,y)\dy +  d^{1-p}\!\int_{\mathds{R}^n \setminus B_R} \big(u(y)\big)_-^{p-1}|y-{x_0}|^{-n-sp}\dy \bigg)\\
\nonumber &&  \ \ \,
\times\!
\left(\int_{B_r} w^p(x) \varphi^p(x)\dx \right),
\end{eqnarray}
where we denoted by $w:=\tilde{u}^{\frac{p-q}{p}}$.
\vspace{1mm}

Now, we consider the integrand of $I_1$. In the case when $\tilde{u}(x)>\tilde{u}(y)$ we can use the following inequality, which is valid for any $\varepsilon\in (0,1)$,
$$
\varphi^p(x) \, \leq\, \varphi^p(y)+c_p\,\varepsilon\,\varphi^p(y)+(1+c_p\varepsilon)\,\varepsilon^{1-p}|\varphi(x)-\varphi(y)|^p,
$$
where $c_p=(p-1)\Gamma(\max\{1,p-2\})$; see Lemma 3.1 in~\cite{DKP13}. 
For any $\delta\in (0,1)$, we choose
$$
\varepsilon:=\delta\, \frac{\tilde{u}(x)-\tilde{u}(y)}{\tilde{u}(x)}\in (0,1)
$$ 
and we get
\begin{eqnarray*}
&& \hspace{-0.8cm}K(x,y) |\tilde{u}(x)-\tilde{u}(y)|^{p-2}(\tilde{u}(x)-\tilde{u}(y))\left[\frac{\varphi^p(x)}{\tilde{u}^{q-1}(x)}-\frac{\varphi^p(y)}{\tilde{u}^{q-1}(y)}\right] \\[0.8ex]
&&\leq  K(x,y) |\tilde{u}(x)-\tilde{u}(y)|^{p-2}\frac{(\tilde{u}(x)-\tilde{u}(y))}{\tilde{u}^{q-1}(x)}\,\varphi^p(y)\left[1+c_p\delta\frac{\tilde{u}(x)-\tilde{u}(y)}{\tilde{u}(x)}-\frac{\tilde{u}^{q-1}(x)}{\tilde{u}^{q-1}(y)} \right]\\
&& \quad + \,c\,K(x,y)\tilde{u}^{p-q}(x)\delta^{1-p}|\varphi(x)-\varphi(y)|^p.
\end{eqnarray*}
Note that the first term that appears in the  inequality above can be rewritten as follows
\begin{equation*}\label{def_j1o}
K(x,y) \frac{(\tilde{u}(x)-\tilde{u}(y))^p}{(\tilde{u}(x))^q}\,\varphi^p(y)\left(c_p\delta+\frac{1-\frac{\tilde{u}^{q-1}(x)}{\tilde{u}^{q-1}(y)}}{1-\frac{\tilde{u}(y)}{\tilde{u}(x)}}\right)=:J_1.
\end{equation*}
For this, consider the real function $t\mapsto g(t)$ given by 
$$
g(t):=\frac{1-t^{1-q}}{1-t}=-\frac{q-1}{1-t}\int_t^1\tau^{-q}\,{\rm d}\tau, \quad \forall\,\, t\in(0,1).
$$
Since $q>1$, $g(t)\leq -(q-1)$ for all $t\in(0,1)$. Moreover if $t\in (0,1/2]$ we have
$$
g(t)\leq -\,\frac{(q-1)}{2^{q}}\,\frac{t^{1-q}}{(1-t)}.
$$
Therefore, it is convenient to distinguish now the case when $2\tilde{u}(y)\leq\tilde{u}(x)$ and that when $2\tilde{u}(y)>\tilde{u}(x)$. In the first case,  take  $t=\tilde{u}(y)/\tilde{u}(x)\in (0,1/2]$, so that, in view of the considerations above, it yields
\begin{eqnarray}\label{eq_j11}
J_1 &\leq& K(x,y)\frac{(\tilde{u}(x)-\tilde{u}(y))^p}{(\tilde{u}(x))^q}\,\varphi^p(y)\left(c_p\delta - \frac{q-1}{2^{q}}\frac{\tilde{u}^{q-1}(x)}{\tilde{u}^{q-1}(y)}\frac{\tilde{u}(x)}{\tilde{u}(x)-\tilde{u}(y)}\right) \nonumber\\
&\leq& K(x,y)\frac{(\tilde{u}(x)-\tilde{u}(y))^{p-1}}{(\tilde{u}(y))^{q-1}}\,\varphi^p(y)\left(c_p\delta-\frac{q-1}{2^{q}}\right),
\end{eqnarray}
where we have also used that 
$$
\frac{(\tilde{u}(x)-\tilde{u}(y))\tilde{u}^{q-1}(y)}{(\tilde{u}(x))^q}\leq 1.
$$
Choosing $\delta$ as
\begin{equation}\label{def_delta1}
\delta=\frac{q-1}{2^{q+1} c_p}
\end{equation}
the equation in~\eqref{eq_j11} plainly yields
\begin{equation}\label{eq_j11b}
J_1 
\,\leq\, -K(x,y)\frac{q-1}{2^{q+1}}\frac{(\tilde{u}(x)-\tilde{u}(y))^{p-1}}{(\tilde{u}(y))^{q-1}}\varphi^p(y).
\end{equation}
Moreover, since we are assuming $2\tilde{u}(y)\leq \tilde{u}(x)$, we have
\begin{eqnarray*}
\frac{(\tilde{u}(x)-\tilde{u}(y))^{p-1}}{(\tilde{u}(y))^{q-1}}&\geq &\frac{2^{q-1}(\tilde{u}(x)-\tilde{u}(y))^{p-1}}{(\tilde{u}(x))^{q-1}}\\[1ex]
&\geq& 2^{q-p}(\tilde{u}(x))^{p-q}\\[1ex]
&\geq& 2^{q-p}\left((\tilde{u}(x))^{\frac{p-q}{p}}-(\tilde{u}(y))^{\frac{p-q}{p}}\right)^p.
\end{eqnarray*}
Finally, combining the preceding inequality together with~\eqref{eq_j11b}, we obtain the following estimate for $J_1$ in the case when $2\tilde{u}(y)\leq \tilde{u}(x)$,
\begin{equation}\label{J1_est1}
J_1\, \leq \, -\frac{q-1}{2^p} K(x,y)(w(x)-w(y))^p\varphi^p(y).
\end{equation}
\vspace{1mm}

It remains to consider the case when $2\tilde{u}(y)>\tilde{u}(x)$. Firstly, in view of the choice of the parameter~$\delta$ in~\eqref{def_delta1}, we have
\begin{eqnarray*}
J_1&\leq& K(x,y)\frac{(\tilde{u}(x)-\tilde{u}(y))^p}{(\tilde{u}(x))^q}\,\varphi^p(y)\big(c_p\delta-(q-1)\big)\\[1ex]
&=&-(2^{q+1}-1)\frac{q-1}{2^{q+1}} K(x,y) \frac{(\tilde{u}(x)-\tilde{u}(y))^p}{(\tilde{u}(x))^q}\,\varphi^p(y).
\end{eqnarray*}
Now, observe that
\begin{eqnarray*}
(w(x)-w(y))^p&=&\left(\frac{p-q}{p}\right)^p\left(\int_{\tilde{u}(y)}^{\tilde{u}(x)}t^{-\frac qp}\,{\rm d}t\right)^p\\[0.5ex]
&\leq&\left(\frac{p-q}{p}\right)^p\frac{1}{(\tilde{u}(y))^q}\left(\int_{\tilde{u}(y)}^{\tilde{u}(x)}{\rm d}t\right)^p\\[0.5ex]
&=&\left(\frac{p-q}{p}\right)^p\frac{(\tilde{u}(x)-\tilde{u}(y))^p}{(\tilde{u}(y))^q}\\[0.5ex]
&\leq& 2^q \left(\frac{p-q}{p}\right)^p\frac{(\tilde{u}(x)-\tilde{u}(y))^p}{(\tilde{u}(x))^q}.
\end{eqnarray*}
Hence, we get  
\begin{equation}\label{J1_est2}
J_1
\, \leq\, -(2^{q+1}-1)\frac{q-1}{2^{2q+1}}\left(\frac{p}{p-q}\right)^p K(x,y)(w(x)-w(y))^p\varphi^p(y).
\end{equation}
\vspace{1mm}

All in all, by comparing the estimates in~\eqref{J1_est1} and~\eqref{J1_est2}, we obtained the following estimate for the contribution in~$J_1$, when $\tilde{u}(x)>\tilde{u}(y)$, 
\begin{equation}\label{J1_est_fin}
J_1
\, \leq \, 
-c \,K(x,y)(w(x)-w(y))^p\varphi^p(y)
\end{equation}
where
$$
c=\min\left\{\frac{q-1}{2^{p+1}},\, (2^{q+1}-1)\frac{q-1}{2^{2q+1}}\left(\frac{p}{p-q}\right)^p\right\}.
$$

Observe that when $\tilde{u}(x) =\tilde{ u}(y)$, then the estimate~\eqref{J1_est_fin} trivially holds. 
\vspace{1mm}

On the other hand, in the case when $\tilde{u}(y) > \tilde{u}(x)$, it suffices just to exchange the roles of $x$ and $y$ in the whole computations made before. We finally arrive at  
\begin{eqnarray*}
\nonumber I_1 &\leq &-c\int_{B_r}\int_{B_r} |w(x)-w(y)|^p\,\varphi^p(y)\dn\\
&&+\,c\int_{B_r}\int_{B_r}(\max\{w(x),w(y)\})^p |\varphi(x)-\varphi(y)|^p \dnb.
\end{eqnarray*}

Now, putting the inequality above and \eqref{I23} in~\eqref{I123}, we obtain
\begin{eqnarray*}
&& \hspace{-0.5cm}\int_{B_r}\int_{B_r} |w(x)-w(y)|^p\,\varphi^p(y)\dn\\
&& \qquad \leq c\int_{B_r}\int_{B_r}(\max\{w(x),w(y)\})^p |\varphi(x)-\varphi(y)|^p \dnb \\
&& \qquad \quad +c \,\bigg( \sup_{z\in \text{\rm supp} \varphi} \int_{\mathds{R}^n \setminus B_r}\bar{K}(z,y)\dy 
 \\
&& \qquad \qquad \quad
+  d^{1-p}\int_{\mathds{R}^n \setminus B_R} \big(u(y)\big)_-^{p-1}|y-{x_0}|^{-n-sp}\dy \bigg)\left(\int_{B_r} w^p(x) \varphi^p(x)\dx \right),
\end{eqnarray*}
which, together with the fact that
\begin{eqnarray*}
|w(x)\varphi(x)-w(y)\varphi(y)|^p
\! & \leq & \! c\,\big(\max\big\{w(x),w(y)\big\}\big)^p |\varphi(x)-\varphi(y)|^p \\
& & \! +c\, |w(x)-w(y)|^p\,\varphi^p(y),
\end{eqnarray*}
yields the estimate~\eqref{eq_cacio2}.
\end{proof}

Now we are ready to prove Theorem~\ref{thm_weak} in a quite standard way.

\begin{proof}[Proof of Theorem~\ref{thm_weak}]
For simplicity of notation, we replace $r$ by $r/2$ below compared to the statement of the theorem. 
First, let $1/2 < \sigma'  < \sigma \leq 3/4$ and let $\varphi \in C_0^\infty(B_{\sigma r})$ be such that $\varphi = 1$ in $B_{\sigma' r}$ and $|D\varphi| \leq 4/[(\sigma-\sigma')r]$. We apply the fractional Sobolev inequality to the function $w\varphi$, with $w=\tilde{u}^{\frac{p-q}{p}}=(u+d)^{\frac{p-q}{p}}$. Together with the assumptions on the kernel $K$ we get
\begin{equation}\label{eq_fratio}
\left(\dashint_{B_{r}}|w(x)\varphi(x)|^{p^*}\,{\rm d}x\right)^{\frac{p}{p^*}}
 \leq \, c\, \frac{r^{sp}}{r^n}\int_{B_r}\int_{B_r}|w(x)\varphi(x)-w(y)\varphi(y)|^p \dn.
\end{equation}
Moreover, we have
\begin{eqnarray}\label{eqqq}
&& \int_{B_r}\int_{B_r}\big(\max\big\{w(x),w(y)\big\}\big)^p |\varphi(x)-\varphi(y)|^p \dnb \, \leq \, \frac{c\,r^{-sp}}{(\sigma-\sigma')^p}\int_{B_{\sigma r}}w^p(x)\,{\rm d}x,
\end{eqnarray}
where we also used the fact that $\varphi$ satisfies $|D\varphi|\leq 4/[(\sigma-\sigma')r]$.

Finally, by combining \eqref{eq_fratio} and \eqref{eqqq} with \eqref{eq_cacio2}, and recalling the definition in~\eqref{def_tail}, we get
\begin{eqnarray}\label{eq_dait}
&& \hspace{-0.5cm} \left(\dashint_{B_r}|w(x)\varphi(x)|^{p^*}\,{\rm d}x\right)^{\frac{p}{p^*}} \nonumber \\ 
&&\qquad\qquad  \leq \ c \left\{\frac1{(\sigma-\sigma')^p}+d^{1-p}\,\left(\frac r R\right)^{sp}\left[\text{\rm Tail}(u_-; {x_0}, R)\right]^{p-1}\right\}\dashint_{B_r} w^p(x)\,{\rm d}x,
\end{eqnarray}
where we also used that 
$$
 \sup_{z\in \text{\rm supp} \varphi} \int_{\mathds{R}^n \setminus B_r}\bar{K}(z,y)\dy 
 \leq c\,r^{-sp} .
$$
Choosing $d$ as in Lemma~\ref{thm_exp}, that is
\begin{equation}\label{d2}
d:=\frac 12\left(\frac{r}{R}\right)^{\frac{sp}{p-1}} \ta(u_-;x_0,R),
\end{equation}
and $\varphi\equiv 1$ in $B_{r/2}$ and recalling the definition of $w$, we deduce from \eqref{eq_dait}
$$
\left(\dashint_{B_{\sigma' r}} \tilde{u}^{(p-q)\frac{n}{n-sp}}\,\dx\right)^{\frac{n-sp}{n}}\leq\, \frac{c}{(\sigma-\sigma')^p}\,\dashint_{B_{\sigma r}} \tilde{u}^{p-q}\,\dx
$$
with $c=c(n,p,s,q,\lambda, \Lambda)$. Now if $q\in (1,p)$ by a standard finite Moser iteration (see, e.~\!g., Theorem~8.18 in~\cite{GT01} and also Theorem~1.2 in~\cite{Tru67}), we can get
\begin{equation}\label{finite_moser}
\left(\dashint_{B_{r}} \tilde{u}^{t}\,\dx\right)^{\frac{1}{t}}\leq c\left(\dashint_{B_{3r/4}} \tilde{u}^{t'}\,\dx\right)^{\frac{1}{t'}}\qquad \forall\,\,\, 0<t'<t<\frac{n(p-1)}{n-ps}.
\end{equation}
To get the desired result we have to apply Lemma~\ref{est_inf} to $\tilde{u}$, noticing that it is a weak supersolution to problem \eqref{problema}. Thus, by combining~\eqref{inf} with~\eqref{finite_moser} for $t'=\varepsilon$, we obtain the following estimate for $\tilde{u}$
$$
\left(\dashint_{B_{r/2}} \tilde{u}^{t}\,\dx\right)^{\frac{1}{t}}\leq c\inf_{B_{2r}} \tilde{u}+ c\left(\frac r R\right)^{\frac{sp}{p-1}}\ta(\tilde{u}_-;x_0,R).
$$
Finally, in order to arrive at~\eqref{eq1}, it is sufficient to notice that the following estimate holds
$$
\left(\dashint_{B_{r/2}}u^{t}\,\dx\right)^{\frac{1}{t}}\leq \left(\dashint_{B_{r/2}} \tilde{u}^{t}\,\dx\right)^{\frac{1}{t}},
$$
and to recall the definition of~$d$ given by~\eqref{d2}. The proof is complete.
\end{proof}

\vspace{2.5mm}

{\bf Acknowledgments.}
This paper was partially carried out while TK was visiting the University of Parma, 
supported by the \href{http://prmat.math.unipr.it/~rivista/eventi/2010/ERC-VP/}{ERC grant 207573 ``Vectorial Problems''}, and it was finalized during the program ``Evolutionary Problems'' at the Institut Mittag-Leffler (Djursholm, Sweden) in the Fall 2013.
GP has been also supported by PRIN~2010-11 ``Calcolo delle Variazioni''.
The authors gratefully acknowledge the support and the hospitality of all these institutions.

\vspace{1.5mm}

\vspace{3mm}

\end{document}